\newtheorem{theorem}{Theorem}[section]
\newtheorem{lemma}[theorem]{Lemma}
\newtheorem{thmx}{Theorem}
\newtheorem{proposition}[theorem]{Proposition}
\newtheorem{claim}[theorem]{Claim}
\theoremstyle{definition}
\newtheorem{definition}[theorem]{Definition}
\theoremstyle{remark}
\newtheorem{remark}[theorem]{Remark}
\newtheorem{que}[theorem]{Question}
\numberwithin{equation}{section}
\begin{document}

\title{On Lagrangian Tori in $S^2\times S^2$}


\author{Han Lou}
\address{Department of Mathematics, University of Georgia, Athens, GA 30602}
\email{Han.Lou@uga.edu}

\subjclass[2020]{Primary 53D12, 57K43}

\keywords{Lagrangian submanifolds, toric fiber, Hamiltonian isotopy, symmetric probes, displacement energy germ}

\begin{abstract}
In \cite{FOOO}, K. Fukaya, Y. Oh, H. Ohta, and K. Ono (FOOO) obtained the monotone symplectic manifold $S^2\times S^2$ by resolving the singularity of a toric degeneration of a Hirzebruch surface. They identified a continuum of toric fibers in the resolved toric degeneration that are  not Hamiltonian isotopic to the toric fibers of the standard toric structure on $S^2\times S^2$. In this paper, we provide a comprehensive classification: for any toric fiber in FOOO's construction of $S^2\times S^2$, we determine whether it is Hamiltonian isotopic to a toric fiber of the standard toric structure of $S^2\times S^2$. 
\end{abstract}

\maketitle

\tableofcontents

\begin{section}{Introduction}
Let $(M^{2n}, \omega)$ be a symplectic manifold. A \textit{Lagrangian submanifold} $L$ is a submanifold of $M$ with dimension $n$ such that $\omega|_L=0$. In \cite{A}, Arnold defined a Lagrangian knot as a connected component of the space of Lagrangian embeddings in a fixed symplectic manifold. For Lagrangian embeddings $\mathbb{R}^2\to\mathbb{R}^4$(coinciding with embeddings of the plane $(z_1, 0)$ outside some sphere in the standard four-dimensional symplectic space), he also proposed the following two questions.
\begin{que}{\cite[section 6]{A}}
    Can any knot in the ordinary sense be realized by a Lagrangian one?
\end{que}
\begin{que}{\cite[section 6]{A}}
     Are there purely Lagrangian knots, that is, Lagrangian embeddings homotopic to the plane in the class of all embeddings, but non-homotopic in the class of Lagrangian embeddings?
\end{que}

Buliding on Arnold's questions, several significant works have deepened our understanding of Lagrangian embeddings.
In \cite{C}, Y. Chekanov constructed the \textit{special tori} in $\mathbb{R}^{2n}$ that are not symplectomorphic to each other. These tori are examples of monotone Lagrangian tori that are Lagrangian isotopic but not Hamiltonian isotopic to an elementary torus.
In \cite{EP}, Y. Eliashberg and L. Polterovich considered if two Lagrangian embeddings are isotopic in smooth, Lagrangian, or Hamiltonian sense. 

Considering the monotone $S^2\times S^2$, G. Dimitroglou Rizell, E. Goodman, and A. Ivrii in \cite{RGI} showed that any two Lagrangian tori are Lagrangian isotopic. There are several different constructions of monotone Lagrangian tori in $S^2\times S^2$ that are not Hamiltonian isotopic to the Clifford torus, the product of the equators. Using P. Biran's circle bundle construction in \cite{Bi} one can get such a Lagrangian torus. P. Albers and U. Frauenfelder in \cite{AF} constructed a nondisplaceable Lagrangian torus in $T^*S^2$. Then one can get such a Lagrangian torus by an embedding from $D^*S^2$, a disk subbundle of $T^*S^2$, to $S^2\times S^2$. 
M. Entov and L. Polterovich constructed a non-heavy monotone Lagrangian torus in \cite[Example 1.22]{EP2}.
Y. Chekanov and F. Schlenk in \cite{CS} also constructed such a monotone Lagrangian torus. J. Oakley and M. Usher in \cite{OU} showed that the above four Lagrangian tori are Hamiltonian isotopic to each other. A. Gadbled in \cite{G} also showed that the Lagrangian tori in \cite{CS} and \cite{Bi} are Hamiltonian isotopic. 

We consider $S^2$ as the unit sphere in $\mathbb{R}^3$ with symplectic form $\omega_{std}$ such that the $\omega_{std}$-area of $S^2$ is $4\pi$. Then $\left(S^2, \frac{1}{2}\omega_{std}\right)\times \left(S^2, \frac{1}{2}\omega_{std}\right)$ has a standard toric structure with the moment map
\begin{equation*}
    \mu:S^2\times S^2\to \mathbb{R}^2\\
\end{equation*}
\begin{equation*}
    \left(\left(v_1, v_2, v_3\right), \left(w_1, w_2, w_3\right)\right)\mapsto\left(\frac{1}{2}v_1, \frac{1}{2}w_1\right)
\end{equation*}
The moment polytope, as in Figure \ref{p1}, is the square
\begin{equation*}
    P_1=\left\{(x, y)\in\mathbb{R}^2\mid -\frac{1}{2}\le x\le\frac{1}{2}, -\frac{1}{2}\le y\le\frac{1}{2}\right\}
\end{equation*}

\begin{figure}[h]
    \centering
    \includegraphics[scale=0.5]{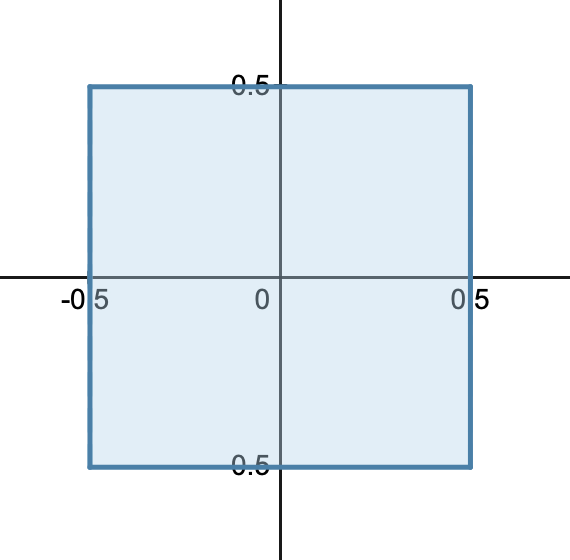}
    \caption{The moment polytope $P_1$ for the standard toric structure of $S^2\times S^2$.}
    \label{p1}
\end{figure}

As in \cite{FOOO}, $S^2\times S^2$ can be obtained by resolving the singularity of a toric degeneration. Now we recall the construction.
The toric Hirzebruch surfaces $F_2(\alpha)$, $0<\alpha<1$, are toric manifolds with moment polytope 
\begin{equation*}
    \{(x, y)\in\mathbb{R}^2\mid 0\le x\le 2-2y, 0\le y\le 1-\alpha\}
\end{equation*}
As $\alpha\to 0$, we obtain an orbifold $F_2(0)$ with a singularity of the form $\mathbb{C}^2/\pm$. The moment polytope of $F_2(0)$, as in Figure \ref{p2}, is 
\begin{equation*}
    P_2=\{(x, y)\in\mathbb{R}^2\mid 0\le x\le 2-2y, y\ge 0 \}
\end{equation*}
\begin{figure}[h]
    \centering
    \includegraphics[scale=0.5]{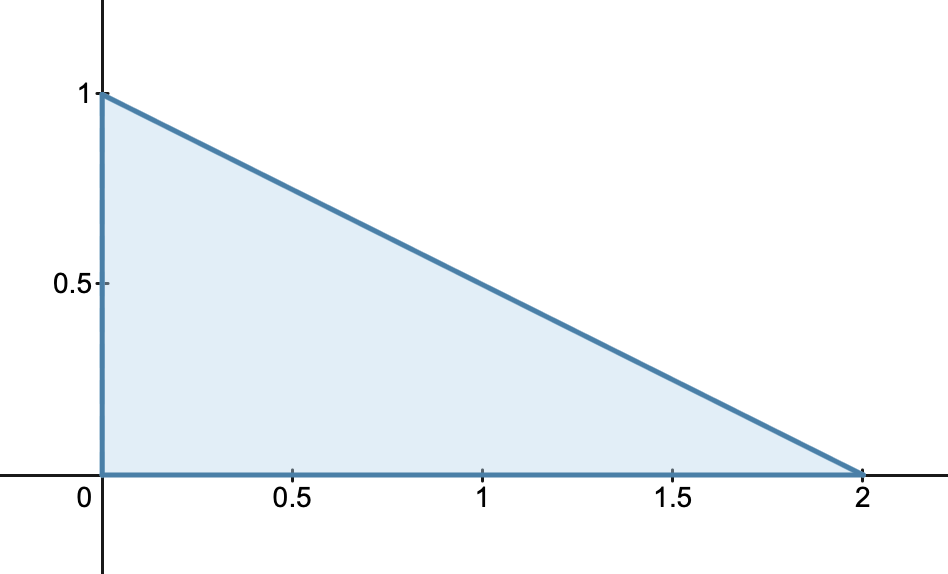}
    \caption{The moment polytope of $\hat{F}(0)$.}
    \label{p2}
\end{figure}
and the preimage of the point $(0, 1)$ is the singularity. To resolve the singularity, we replace a neighborhood of it with a neighborhood of the zero section of the cotangent bundle $T^*S^2$. The resulting complex surface is denoted  by $\hat{F}(0)$. There is a well-known result that $F_2(\alpha)$ is symplectomorphic to $\left(S^2, \frac{1-\alpha}{2}\omega_{std}\right)\times\left(S^2, \frac{1+\alpha}{2}\omega_{std}\right)$, and $\hat{F}(0)$ is symplectomorphic to $\left(S^2, \frac{1}{2}\omega_{std}\right)\times\left(S^2, \frac{1}{2}\omega_{std}\right)$ \cite[Proposition 5.1]{FOOO}. We still say the moment polytope of $\hat{F}(0)$ is $P_2$ with the preimage of the point $(0, 1)$ being $S^2$.

Given a point $(x, y)$ in the segment $\left\{(x, y)\in P_2\mid x+y=1,\frac{1}{2}\le y<1\right\}$, K. Fukaya, Y. Oh, H. Ohta, and K. Ono in \cite{FOOO} have shown the preimage of $(x, y)$ under the moment map is a nondisplaceable Lagrangian torus using Lagrangian Floer homology. In particular, the preimage is not Hamiltonian isotopic to a toric fiber of the standard toric structure. This conclusion can be obtained from the fact that the only nondisplaceable toric fiber of the standard toric structure is the Clifford torus and from Theorem 1.1 in \cite{FOOO}. J. Oakley and M. Usher in \cite{OU} showed that the preimage of $\left(\frac{1}{2}, \frac{1}{2}\right)$ is Hamiltonian isotopic to the Lagrangian torus in \cite{Bi}, \cite{AF}, \cite{EP2}, and \cite{CS}. Then one can ask the following questions.
\begin{que}
    Can this discussion be extended to encompass other interior points within the moment polytope $P_2$? Specifically, can we show if their preimages are Hamiltonian isotopic to toric fibers of the standard toric structure?
\end{que}
\begin{que}
    Are there other Lagrangian tori, except those in \cite{FOOO}, that are not Hamiltonian isotopic to the product tori in $S^2\times S^2$? We call a torus of the form $\alpha\times\beta$ with $\alpha$ and $\beta$ are embedded curves in $S^1\times\{pt\}$ and $\{pt\}\times S^1$ respectively a product torus.
\end{que}

We will prove the following theorems to answer the questions.
\begin{thmx}\label{A}
    Given an interior point $(x, y)$ in $P_2$ with $x+y\ne 1$, let $L(x, y)$ be the preimage of $(x, y)$ in $\hat{F}(0)$ and still denote by $L(x, y)$ its image under a symplectomorphism from $\hat{F}(0)$ to $S^2\times S^2$. Then $L(x, y)$ is a Lagrangian torus Hamiltonian isotopic to a toric fiber. Furthermore, if we denote the preimage of an interior point $(\xi, \zeta)$ in $P_1$ by $T(\xi,\zeta)$, then
    \begin{equation*}
        L(x, y) \:\text{is Hamiltonian isotopic to} \begin{cases}
        
            T\left(\frac{1}{2}-y, \frac{3}{2}-2y-x\right) & \text{for}\: 1-y<x< 2-2y\\
            T\left(-\frac{1}{2}+x, \frac{1}{2}-y\right) & \text{for}\: 0<x<1-y
        \end{cases}
    \end{equation*}
\end{thmx}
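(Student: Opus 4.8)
The plan is to realize each of the two Hamiltonian isotopies by a short sequence of symmetric probes carried out in the degenerate moment polytope $P_2$, using the circle subactions of the residual $T^2$-action that survive on the complement of the exceptional sphere over $(0,1)$. The mechanism I will use is the following: if two interior points $p$ and $q$ of a moment polytope lie on a common probe — a segment entering the polytope transversally to a facet in a primitive integral direction $\Lambda$ — and if $p$ and $q$ are symmetric about the midpoint of the chord cut out by the probe, then the cutoff Hamiltonian generated by the $S^1$-action in direction $\Lambda$ produces a Hamiltonian isotopy carrying $L(p)$ to $L(q)$. The one subtlety in our setting is that this $S^1$-action must be defined along the entire chord and on all intermediate fibers, so I will only admit probes whose chord, together with the swept family of fibers, stays in the locus where the torus action is free, i.e. away from the exceptional sphere.

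First I would fix a reference region in which a fiber $L(x,y)$ can be matched with an explicit product torus $T(\xi,\zeta)$. Using the symplectomorphism of \cite[Proposition 5.1]{FOOO}, $L(x,y)$ is carried to a Lagrangian torus in the standard $S^2\times S^2$; because $H^1(S^2\times S^2)=0$, the identity component of the symplectomorphism group coincides with the group of Hamiltonian diffeomorphisms, so it suffices to exhibit \emph{some} symplectomorphism in the identity component taking $L(x,y)$ to a toric fiber of $P_1$. I would then determine the affine dependence of $(\xi,\zeta)$ on $(x,y)$ — and in particular fix the correct normalization of the two charts — by matching the displacement energy germ of the two fibers, which is a Hamiltonian-isotopy invariant and pins down $(\xi,\zeta)$ unambiguously.

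With the reference identification in hand, for a point $(x,y)$ with $1-y<x<2-2y$ (respectively $0<x<1-y$) I would produce a probe, or a composition of two probes, carrying $(x,y)$ into the reference region while avoiding the exceptional sphere, and then track the resulting composite affine map. The two cases arise because the branch cut of the node runs along the segment $\{x+y=1\}$: the admissible probes — those whose chords avoid the exceptional sphere — are forced to approach the reference region from one side of the cut or the other, and the two families induce the two distinct affine maps $(x,y)\mapsto(\tfrac12-y,\tfrac32-2y-x)$ and $(x,y)\mapsto(x-\tfrac12,\tfrac12-y)$. That the two formulas agree along $\{x+y=1\}$ while the isotopy statement fails there is exactly the shadow of the monodromy around the node and of FOOO's nondisplaceable fibers, which is why the theorem excludes that segment.

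I expect the main obstacle to be the control near the exceptional sphere. Concretely, for every probe I use I must check genuine admissibility — that its chord lies in $P_2$, that $\Lambda$ is integrally transverse to the relevant facet, and, most importantly, that no intermediate fiber of the cutoff Hamiltonian isotopy meets the exceptional sphere, where the generating $S^1$-action degenerates. Verifying that the admissible probes from each side of the cut sweep out the whole of the corresponding region, and that their composite affine maps are precisely the two stated ones, is the technical heart of the argument; throughout, the displacement energy germ functions as the consistency check guaranteeing that the target toric fiber has been identified correctly and that no isotopy has inadvertently crossed to the wrong chart.
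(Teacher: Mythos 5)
Your probe mechanism and your caution about avoiding the exceptional sphere are sound, and they match what the paper does in its Case 2 (the symmetric probe $\{p=a\}$, $a\ne 0$, justified by toric reduction away from the singular fiber). But the proposal has a genuine gap at its center: probes, whether run in $P_2$ or in $P_1$, can only move toric fibers of a \emph{given} toric structure to other fibers of that \emph{same} structure. The content of Theorem A is an identification \emph{across} two different toric structures on $S^2\times S^2$, and no interior fiber of $P_2$ literally coincides with an interior fiber of $P_1$: on $L(x,y)$ only the combinations $v_1+w_1$ and $v\cdot w$ are constant, while on an interior product torus $v_1$ and $w_1$ are separately constant and $v\cdot w$ is non-constant (it equals $4\xi\zeta + ab\cos(\theta_1-\theta_2)$ in the obvious parametrization). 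So there is no ``reference region'' where the matching of an $L(x,y)$ with a $T(\xi,\zeta)$ is automatic; it must be constructed, and your proposal only asserts it --- the remark that $H^1(S^2\times S^2)=0$ forces $\mathrm{Symp}_0=\mathrm{Ham}$ merely rephrases what has to be proved. This bridge is exactly what the paper's Case 1 ($0<p^2<q^4$) builds: conjugate the $S^1$-symmetry $(R_t,R_t)$ of $L_1(p,q)$ to $(R_t,R_{-t})$, transport to $B^2(1)\times B^2(1)$, write the torus as $F^{-1}(\Gamma)\cap H^{-1}(\{-p\})$ with $F(z_1,z_2)=z_1z_2$ and $H=|z_1|^2-|z_2|^2$ in the style of Eliashberg--Polterovich, and then deform the curve $\Gamma$ to a round circle $S^1(r)$ by a flow downstairs that lifts to a Hamiltonian isotopy upstairs (possible precisely because $p\ne 0$); the preimage of a round circle is a genuine product torus. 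Without some construction of this type, your probe chords have nothing to land on.

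The second problem is your use of the displacement energy germ to ``pin down $(\xi,\zeta)$ unambiguously.'' This is circular: to compute the germ $S^e_{L(x,y)}$ one needs the displacement energies of the nearby deformed tori $L(x',y')$, i.e., one needs to already know which product tori they are Hamiltonian isotopic to --- which is Theorem A itself. (Indeed, in the paper the germ of $L_1(0,q)$ is computed in the proof of Theorem B only by quoting Theorem A.) Moreover the raw displacement energy $\min\{\tfrac12-|\xi|,\tfrac12-|\zeta|\}$ is a single number whose level sets in $P_1$ are one-dimensional, so it cannot single out the target fiber even granting an identification exists. In the paper the target is instead determined \emph{inside} the construction, by the Appendix computation that the $\omega^p$-area enclosed by $\Gamma$ is $2\pi-2\pi q$, matched against the $\omega^p$-area of $S^1(r)$; the probes then only serve to fold the region $p^2\ge q^4$ onto the region covered by the explicit construction, together with Brendel's probes in $P_1$ to put the answer in the stated form. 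In short: keep your probe analysis, but you must supply the explicit cross-structure isotopy and an area (not energy) computation to identify the target fiber.
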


\begin{remark}
    Theorem \ref{A} does not depend on the choice of the symplectomorphism between $\hat{F}(0)$ and $S^2\times S^2$ since every symplectomorphism of $S^2\times S^2$ can be written as the composition of a Hamiltonian diffeomorphism and the diffeomorphism that switches the two factors of $S^2\times S^2$. See \cite{OU} and \cite{Gro}. In addition, the toric fiber $T(\xi, \zeta)$ is Hamiltonian isotopic to $T(\zeta, \xi)$ by \cite{B}.
\end{remark}

\begin{thmx}\label{B}
    Let $(x, y)$ be an interior point of $P_2$ and $x+y=1$. Then $L(x, y)$ is not Hamiltonian isotopic to a product torus.
\end{thmx}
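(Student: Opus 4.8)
The plan is to reduce the assertion to the single statement that $L(x,y)$ is not Hamiltonian isotopic to the Clifford torus $T(0,0)$, and then to separate these two tori by the behavior of the displacement energies of their nearby Lagrangian deformations — the displacement energy germ. First I would sort the product tori by displaceability. Write a product torus as $\alpha\times\beta$ with $\alpha,\beta\subset S^2$ embedded circles. If either factor, say $\alpha$, does not bisect the area of its $S^2$, then $\alpha$ is Hamiltonianly displaceable inside that factor, and the product of the displacing isotopy with the identity displaces $\alpha\times\beta$; since $L(x,y)$ lies on the segment $x+y=1$ and is therefore nondisplaceable, and nondisplaceability is a Hamiltonian isotopy invariant, no such $\alpha\times\beta$ can be Hamiltonian isotopic to $L(x,y)$. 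If instead both $\alpha$ and $\beta$ are area-bisecting, then each is Hamiltonian isotopic in its own factor to the equator (any two area-bisecting simple closed curves in $S^2$ are Hamiltonian isotopic, using that $\mathrm{Symp}(S^2)$ is connected and $H^1(S^2;\mathbb{R})=0$), so the product of these two isotopies exhibits $\alpha\times\beta$ as Hamiltonian isotopic to $T(0,0)$. Hence it suffices to prove that $L(x,y)$ is not Hamiltonian isotopic to $T(0,0)$.

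To separate $L(x,y)$ from $T(0,0)$ I would use the displacement energy germ. Fixing a Weinstein neighborhood of a Lagrangian torus $L$ identifies it with a neighborhood of the zero section of $T^*T^2$, under which the nearby Lagrangian tori appear as graphs of closed $1$-forms and are classified up to Hamiltonian isotopy (in the neighborhood, and hence in $S^2\times S^2$) by their flux class in $H^1(L;\mathbb{R})\cong\mathbb{R}^2$. Since equal-flux deformations are Hamiltonian isotopic, the displacement energy $e(L_v)$ of the flux-$v$ deformation depends only on $v$, giving a well-defined germ at $0$. A Hamiltonian isotopy $\psi$ from $L$ to $L'$ carries flux-$v$ deformations of $L$ to flux-$\psi_* v$ deformations of $L'$ and preserves displacement energy, so this germ — and in particular its nondisplaceable locus $Z(L)=\{v : e(L_v)=+\infty\}$ — is a Hamiltonian isotopy invariant up to the linear isomorphism $\psi_*$.

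The final step computes $Z$ for the two tori. For the Clifford torus the flux deformations are, up to Hamiltonian isotopy, the standard fibers $T(\xi,\zeta)$, and the assignment $(\xi,\zeta)\mapsto v$ is a local diffeomorphism near $0$; since $T(\xi,\zeta)$ is displaceable whenever $(\xi,\zeta)\neq(0,0)$ (one of its circle factors is then non-equatorial, hence displaceable in its $S^2$), we obtain $Z(T(0,0))=\{0\}$, an isolated point. For $L(x,y)$ the neighboring toric fibers $L(x',y')$ with $x'+y'=1$ form a smooth one-parameter family of nondisplaceable tori through $L(x,y)$ — while, by Theorem \ref{A}, the fibers off the segment are Hamiltonian isotopic to displaceable standard fibers — so these segment-fibers are flux deformations sweeping out an immersed arc through $0$, and $Z(L(x,y))$ contains a smooth curve through $0$ rather than an isolated point. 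As the germ of $Z$ at $0$ is preserved up to linear isomorphism under any Hamiltonian isotopy, and a linear isomorphism cannot carry an isolated point to a set containing a curve, $L(x,y)$ is not Hamiltonian isotopic to $T(0,0)$, which completes the proof.

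The main obstacle I anticipate is making the displacement energy germ rigorous and genuinely isotopy-invariant: one must verify that equal-flux deformations are Hamiltonian isotopic in $S^2\times S^2$ (so that $e(L_v)$ is independent of the representative and of the chosen Weinstein neighborhood), and that the arc of segment-fibers $L(x',y')$ maps to a nonconstant flux, so that $Z(L(x,y))$ is genuinely positive-dimensional. A secondary point is that FOOO's nondisplaceability, stated in the introduction for $\tfrac12\le y<1$, must be available along a two-sided neighborhood of the given point on the whole open segment $0<y<1$; this is needed both for the argument and for the truth of the statement, and I would obtain it from the factor-swap symmetry recorded in the Remark, which reflects the segment about its midpoint $(\tfrac12,\tfrac12)$.
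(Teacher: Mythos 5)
There is a genuine gap, and it sits exactly at the heart of the theorem. Your entire argument rests on the premise that $L(x,y)$ is nondisplaceable at every interior point of the segment $x+y=1$, and that premise is false: FOOO's nondisplaceability covers only $\tfrac12\le y<1$, while for $0<y<\tfrac12$ (equivalently $q=1-y\in(\tfrac12,1)$) the fiber is \emph{displaceable} — this is stated in the Remark following Theorem \ref{B}, and it is precisely why that half of the segment is the substantive new content of the theorem, untouchable by FOOO's Floer-theoretic methods. Your proposed repair via the factor-swap symmetry cannot work: in the coordinates of Section 2, $L_1(p,q)=\{(v,w)\in S^2\times S^2 \mid v_1+w_1=2p,\ v\cdot w=2q^2-1\}$, and the swap $(v,w)\mapsto(w,v)$ preserves both defining equations, so it fixes each segment fiber $L_1(0,q)$ setwise instead of reflecting the segment about its midpoint; more fundamentally, displaceability is a symplectomorphism invariant, so no symmetry of any kind could carry the displaceable fibers ($q>\tfrac12$) onto the nondisplaceable ones ($q\le\tfrac12$). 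With nondisplaceability gone, both pillars of your proof collapse in this regime: (i) you can no longer exclude product tori with a non-bisecting factor, so the reduction to the Clifford torus fails; and (ii) your invariant $Z(L)=\{v : e(L_v)=\infty\}$ is empty near $0$ for $L_1(0,q)$ with $q>\tfrac12$, exactly as it is for every toric fiber $T(\xi,\zeta)\ne T(0,0)$, so it separates nothing. Note the irony: in the displaceable regime the Clifford torus is the one product torus that is \emph{trivially} excluded (nondisplaceable versus displaceable), while the hard cases — the displaceable fibers $T(\xi,\zeta)$, which your reduction discards in one line — are the whole problem.

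What the displaceable case requires, and what the paper actually does, is finer information than the infinite locus of the germ. Off-diagonal fibers ($|\xi|\ne|\zeta|$) are excluded by a Weinstein-neighborhood argument: a Hamiltonian isotopy $L_1(0,q)\to T(\xi,\zeta)$ would force the nearby fibers $L_1(\epsilon,q)$ to be Hamiltonian isotopic to fibers $T(\xi',\zeta')$ with $(\xi',\zeta')$ close to $(\xi,\zeta)$, contradicting Theorem \ref{A} combined with Brendel's classification \cite{B} of toric fibers up to Hamiltonian isotopy. Diagonal fibers $T(\xi,\xi)$ are excluded using the \emph{finite} values of the Chekanov--Schlenk displacement energy germ: Theorem \ref{A} identifies the off-segment deformations $L_1(\delta_1,q+\delta_2)$ with explicit standard fibers of known displacement energy, giving $S^e_{L_1(0,q)}(\delta_1,\delta_2)=1-q-\delta_2$ whenever $\delta_1\ne 0$ — a function of $\delta_2$ alone — whereas $S^e_{T(\xi,\xi)}$ is a minimum of two linearly independent affine functions; no linear isomorphism of $H^1$ can intertwine the two germs (Proposition \ref{germ}). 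Your outline, restricted to the nondisplaceable range $\tfrac12\le y<1$, does essentially go through (there the reduction to $T(0,0)$ is legitimate, and $Z$ distinguishes a curve from an isolated point), but that case is already settled by \cite{FOOO}; for the case the theorem is really about, the argument must be rebuilt along the lines above.
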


\begin{remark}
    For an interior point $(x, y)$ in $P_2$ with $x+y=1$ and $0<y<\frac{1}{2}$, the preimage of $(x, y)$ is a displaceable Lagrangian torus, so Theorem \ref{B} does not follow from the  techniques in \cite{FOOO}.
\end{remark}

\textbf{Convention} Given a smooth function $H$ on a symplectic manifold $(M, \omega)$, we denote the Hamiltonian vector field by $X_H$ and $w(X_H, \cdot)=-dH$. The flow of $X_H$ is denoted by $\phi_H^t$ and $\phi_H^0=id$.

    \textbf{Acknowledgement} I am grateful to my advisor, Michael Usher, for many helpful discussions on this work and consistent support on my Ph.D. journey. The seed of this paper was planted in the Fall 2023 Informal Student Symplectic Seminar. I am also grateful to Shengzhen Ning for organizing the seminar and to all my friends for the inspiring talks. 
\end{section}

\begin{section}{Proof of Theorem \ref{A}}

J. Oakley and M. Usher gave an explicit expression of the symplectomorphism from $\hat{F}(0)$ to $\left(S^2, \frac{1}{2}\omega_{std}\right)\times\left(S^2, \frac{1}{2}\omega_{std}\right)$ in \cite[Proof of Proposition 2.1]{OU}. The image of $L(x, y)$ under this symplectomorphism, still denoted by $L(x, y)$, is 
\begin{align*}
    L(x, y) &=\left\{(v, w)\in S^2\times S^2 \Bigm| \frac{1}{2}|v+w|+\frac{1}{2}(v+w)\cdot e_1=x, 1-\frac{1}{2}|v+w|=y\right\}\\
    &=\left\{(v, w)\in S^2\times S^2\mid v_1+w_1=2(x+y-1), v\cdot w=2(1-y)^2-1\right\}
\end{align*}
where $(e_1, e_2, e_3)$ is an orthonormal basis for $\mathbb{R}^3$.
Then we change the coordinates by 
\begin{align*}
    p &=x+y-1\\
    q &=1-y
\end{align*}
Under the coordinates $(p, q)$, the moment polytope $P_2$ becomes
\begin{equation*}
    \left\{(p, q)\in\mathbb{R}^2\mid -q\le p\le q, q\le 1\right\}
\end{equation*}
and we still denote it by $P_2$. The Lagrangian torus $L(x, y)$ can be written as 
\begin{equation*}
    L_1(p, q)=\left\{(v, w)\in S^2\times S^2\mid v_1+w_1=2p, v\cdot w=2q^2-1\right\}
\end{equation*}

Similar to \cite[Proof of Proposition 2.4]{OU}, we can prove the following result.
\begin{proposition}\label{prop00}
    For $(p, q)\in\text{Int}(P_2)$ the Lagrangian torus $L_1(p, q)$ is the orbit of an embedded curve $\Gamma_1$ in $S^2\times S^2$ under the $S^1$-action $(R_t, R_t)$, where
    \begin{center}
    $R_t=\begin{bmatrix}
        1 & 0 & 0\\
        0 & \cos(t) & -\sin(t)\\
        0 & \sin(t) & \cos(t)
    \end{bmatrix}$
    \end{center}
    is the rotation around $e_1$-axis by angle $t$.
\end{proposition}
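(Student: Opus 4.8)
The plan is to realize $L_1(p,q)$ as a free orbit by constructing an explicit $S^1$-equivariant ``angle map'' to $S^1$ whose level sets are the desired sections. First I would check the invariance of $L_1(p,q)$ under the diagonal action $(R_t,R_t)$, which is the routine part: since $R_t$ acts as the identity on the first coordinate, the function $v_1+w_1$ is unchanged, and since $R_t$ is orthogonal, $(R_tv)\cdot(R_tw)=v\cdot w$. Hence both defining equations $v_1+w_1=2p$ and $v\cdot w=2q^2-1$ are preserved, so $L_1(p,q)$ is $S^1$-invariant.

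The crux is the following observation, which is where I expect the main obstacle to lie. Write $\pi(v,w)=(v_2+w_2,\,v_3+w_3)$ for the projection of $v+w$ onto the $e_2e_3$-plane. I claim that $\pi$ never vanishes on $L_1(p,q)$ when $(p,q)\in\mathrm{Int}(P_2)$. Indeed, $\pi(v,w)=0$ forces $w_2=-v_2$ and $w_3=-v_3$, whence $1-w_1^2=w_2^2+w_3^2=v_2^2+v_3^2=1-v_1^2$, so $w_1=\pm v_1$. If $w_1=v_1$, then $v_1+w_1=2p$ gives $v_1=w_1=p$ and $v\cdot w=p^2-(1-p^2)=2p^2-1$, which would force $p^2=q^2$, impossible since $|p|<q$ on the interior. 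If $w_1=-v_1$, then $p=0$ and $v\cdot w=-1=2q^2-1$ forces $q=0$, again excluded. This non-vanishing, and the fact that it uses precisely the interiority of $(p,q)$, is the heart of the matter.

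Granting the claim, I would define the smooth map $\Phi:L_1(p,q)\to S^1$ by $\Phi(v,w)=\pi(v,w)/\|\pi(v,w)\|$, where $S^1$ acts on the target by rotation; equivariance is immediate because the $e_2e_3$-projection of $R_t(v+w)$ is the rotation by $t$ of $\pi(v,w)$. Since rotation by any angle in $(0,2\pi)$ moves the nonzero vector $\pi(v,w)$, the map $\Phi$ restricts to a diffeomorphism of each orbit onto $S^1$; in particular every orbit is a genuine circle, so the action is free, and $d\Phi$ is already surjective along orbit directions, making $\Phi$ a submersion. I would then set
\[
\Gamma_1=\Phi^{-1}(1,0)=\{(v,w)\in L_1(p,q)\mid v_3+w_3=0,\ v_2+w_2>0\},
\]
a $1$-dimensional submanifold meeting each orbit in exactly one point and transverse to the orbits.

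Finally I would conclude that the orbit map $S^1\times\Gamma_1\to L_1(p,q)$, $(t,\gamma)\mapsto(R_t,R_t)\gamma$, is a diffeomorphism: it is surjective because each orbit meets $\Gamma_1$, injective because the action is free and $\Gamma_1$ meets each orbit once, and an immersion because $\Gamma_1$ is transverse to the orbit directions. Thus $L_1(p,q)$ is the $(R_t,R_t)$-orbit of $\Gamma_1$, and $\Gamma_1$ is an embedded curve. The only remaining routine checks are that $\Gamma_1$ is connected, hence a circle, which follows from $\Gamma_1\to L_1(p,q)/S^1\cong S^1$ being a continuous bijection of compact Hausdorff spaces.
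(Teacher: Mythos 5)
Your proof is correct, but it takes a genuinely different route from the paper's. The paper is constructive: it exhibits the point $(v_0,w_0)=\bigl((p,\sqrt{1-q^2},\sqrt{q^2-p^2}),(p,-\sqrt{1-q^2},\sqrt{q^2-p^2})\bigr)\in L_1(p,q)$, rotates it about the axis $v_0+w_0$ to produce an explicitly parametrized curve $\gamma_1(\theta)$, verifies by direct computation that this curve is an integral curve of the Hamiltonian vector field of $F_2=\frac{v\cdot w}{4q}$, and then uses that $L_1(p,q)$ is a regular level set of the commuting pair $(F_1,F_2)$ with $F_1=-(v+w)\cdot e_1$, so the flow of $F_1$ (which is exactly $(R_t,R_t)$) sweeps $\Gamma_1$ over the whole torus. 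Your argument is softer: invariance of the two defining equations, freeness of the action via nonvanishing of the $e_2e_3$-projection of $v+w$, an equivariant circle-valued map $\Phi$, and the section $\Gamma_1=\Phi^{-1}(1,0)$. This buys two things: it avoids Hamiltonian vector fields and Poisson brackets entirely, and it isolates exactly where interiority of $(p,q)$ enters (your two excluded degenerations $p^2=q^2$ and $q=0$ are precisely the boundary facets of $P_2$). What it loses is the explicit parametrization, which in the paper is not incidental: the formula for $\gamma_1(\theta)$ is reused later, in the Claim that $H(\tilde{\Gamma}_2)=-p$ and in the Appendix computation of the $\omega^p$-area enclosed by $F(\tilde{\Gamma}_2)$, so with your implicit definition of $\Gamma_1$ one would still need to solve for a parametrization of the section before carrying out those computations. (Incidentally, your section taken over the point $(0,1)\in S^1$ rather than $(1,0)$ is exactly the paper's curve, since on $\gamma_1(\theta)$ one has $v_2+w_2=0$ and $v_3+w_3=2\sqrt{q^2-p^2}>0$.) One small point worth flagging: you implicitly use that $L_1(p,q)$ is a smooth compact connected surface, both when you take $\Phi^{-1}(1,0)$ to be a $1$-dimensional submanifold and when you identify $L_1(p,q)/S^1$ with a circle; this is legitimate here because the discussion preceding the proposition identifies $L_1(p,q)$ as the image of a toric fiber under a symplectomorphism, but it should be stated as an input rather than left tacit.
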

\begin{proof}
    Note that 
    \begin{equation*}
        \left(v_0, w_0\right)=\left(\left(p, \sqrt{1-q^2}, \sqrt{q^2-p^2}\right), \left(p, -\sqrt{1-q^2}, \sqrt{q^2-p^2}\right)\right)
    \end{equation*}
    is a point in $L_1(p, q)$. Then we rotate $v_0$ and $w_0$ around the vector $v_0+w_0$ by angle $\theta$ to get an embedded curve  
    \begin{align*}
        \Gamma_1 =\left\{\left(\begin{bmatrix}
            p+\frac{\sqrt{q^2-p^2}\sqrt{1-q^2}}{q}\sin(\theta)\\
            \sqrt{1-q^2}\cos(\theta)\\
            \sqrt{q^2-p^2}-\frac{p\sqrt{1-q^2}}{q}\sin(\theta)
        \end{bmatrix}, \begin{bmatrix}
            p-\frac{\sqrt{q^2-p^2}\sqrt{1-q^2}}{q}\sin(\theta)\\
            -\sqrt{1-q^2}\cos(\theta)\\
            \sqrt{q^2-p^2}+\frac{p\sqrt{1-q^2}}{q}\sin(\theta)
        \end{bmatrix}\right)\;\middle|\: \theta\in[0, 2\pi]\right\}
    \end{align*}

    Now we consider the Hamiltonians 
    \begin{align*}
        \left(F_1, F_2\right): \left(S^2, \frac{1}{2}\omega_{std}\right)\times \left(S^2, \frac{1}{2}\omega_{std}\right) &\to \mathbb{R}^2\\
        (v, w) &\mapsto \left(-(v+w)\cdot e_1, \frac{v\cdot w}{4q}\right)
    \end{align*}
    Then $L_1(p, q)$ is the regular level set $\left(F_1, F_2\right)^{-1}(-2p, \frac{2q^2-1}{4q})$.
    The Hamiltonian vector field $X_{F_2}$ of $F_2$ is 
    \begin{equation*}
        X_{F_2}(v, w)=\left(\frac{v\times w}{2q}, \frac{w\times v}{2q}\right)
    \end{equation*}
    Take $(v, w)\in\Gamma_1$. Then one can compute 
    \begin{align*}
        \left(\frac{v\times w}{2q}, \frac{w\times v}{2q}\right) &=\left(\begin{bmatrix}
            \frac{\sqrt{q^2-p^2}\sqrt{1-q^2}}{q}\cos(\theta)\\
            -\sqrt{1-q^2}\sin(\theta)\\
            -\frac{p\sqrt{1-q^2}}{q}\cos(\theta)
        \end{bmatrix}, \begin{bmatrix}
            -\frac{\sqrt{q^2-p^2}\sqrt{1-q^2}}{q}\cos(\theta)\\
            \sqrt{1-q^2}\sin(\theta)\\
            \frac{p\sqrt{1-q^2}}{q}\cos(\theta)
        \end{bmatrix}\right)\\
        &=\frac{d\gamma_1}{d\theta}
    \end{align*}
    where $\gamma_1$ is a parametrization of $\Gamma_1$ such that 
    \begin{equation*}
        \gamma_1(\theta)=\left(\begin{bmatrix}
            p+\frac{\sqrt{q^2-p^2}\sqrt{1-q^2}}{q}\sin(\theta)\\
            \sqrt{1-q^2}\cos(\theta)\\
            \sqrt{q^2-p^2}-\frac{p\sqrt{1-q^2}}{q}\sin(\theta)
        \end{bmatrix}, \begin{bmatrix}
            p-\frac{\sqrt{q^2-p^2}\sqrt{1-q^2}}{q}\sin(\theta)\\
            -\sqrt{1-q^2}\cos(\theta)\\
            \sqrt{q^2-p^2}+\frac{p\sqrt{1-q^2}}{q}\sin(\theta)
        \end{bmatrix}\right)
    \end{equation*}
    Thus $\Gamma_1$ is an integral curve of $X_{F_2}$. Since $\{F_1, F_2\}=0$, then $L_1(p, q)$ is the orbit of the curve $\Gamma_1$ under the flow of $F_1$, which gives the $S^1$-action $\left(R_t, R_t\right)$.
\end{proof}

As in \cite[Proof of Proposition 2.4]{OU} and \cite[Lemma 2.4]{G}, the action $\left(R_t, R_t\right)$ and $\left(R_t, R_{-t}\right)$ are conjugate in $SO(3)\times SO(3)$, i.e. 
\begin{equation*}
    \left(R_t, R_t\right)=\left(D_1, D_2\right)^{-1}\left(R_t, R_{-t}\right)\left(D_1, D_2\right)
\end{equation*}
where $D_1=\begin{bmatrix}
    1 & 0 & 0\\
    0 & 1 & 0\\
    0 & 0 & 1
\end{bmatrix}$ and $D_2=\begin{bmatrix}
    -1 & 0 & 0\\
    0 & -1 & 0\\
    0 & 0 & 1
\end{bmatrix}$.
Define $\Gamma_2=\left(D_1, D_2\right)\Gamma_1$ and $L_2(p, q)$ to be the orbit of $\Gamma_2$ under the $S^1$-action $\left(R_t, R_{-t}\right)$. Note that $\left(D_1, D_2\right)L_1(p, q)=L_2(p, q)$ and $\left(D_1, D_2\right)$ is a Hamiltonian diffeomorphism. Thus we have the following result.

\begin{proposition}\label{prop0}
    $L_1(p, q)$ is Hamiltonian isotopic to $L_2(p, q)$.
\end{proposition}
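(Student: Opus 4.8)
The proposition is an immediate consequence of the two facts recorded just before its statement: that $(D_1,D_2)L_1(p,q) = L_2(p,q)$ and that $(D_1,D_2)$ is a Hamiltonian diffeomorphism of $\left(S^2\times S^2,\frac12\omega_{std}\oplus\frac12\omega_{std}\right)$. So the plan is to upgrade the set-level equality to a genuine Hamiltonian isotopy and, for completeness, to substantiate why $(D_1,D_2)$ is Hamiltonian. The mechanism for the first part is standard: if $\psi^s$ denotes the Hamiltonian flow with $\psi^0=\mathrm{id}$ and $\psi^1=(D_1,D_2)$, then $s\mapsto\psi^s\big(L_1(p,q)\big)$ is a Hamiltonian isotopy joining $L_1(p,q)$ to $(D_1,D_2)L_1(p,q)=L_2(p,q)$, which is exactly what must be produced.

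To see that $(D_1,D_2)$ is Hamiltonian I would argue factorwise. Since $D_1=\mathrm{id}$ acts trivially on the first sphere, only $D_2=\mathrm{diag}(-1,-1,1)$ matters, and this is precisely the rotation of $S^2\subset\mathbb{R}^3$ by angle $\pi$ about the $e_3$-axis. The one-parameter family of rotations about $e_3$ is the Hamiltonian flow of the height function $w\mapsto w_3$ on $(S^2,\frac12\omega_{std})$: a short computation in the coordinates $(w_3,\theta)$, in which $\frac12\omega_{std}=\frac12\,dw_3\wedge d\theta$, shows this flow rotates $\theta$ at constant speed, and with this normalization the time-$\tfrac{\pi}{2}$ map is $D_2$. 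Pulling the Hamiltonian back to the product as $G(v,w)=w_3$, its flow is $(\mathrm{id},\ \text{rotation about }e_3)$, whose relevant time-map is $(D_1,D_2)$; hence $(D_1,D_2)\in\mathrm{Ham}(S^2\times S^2)$ and this flow is the $\psi^s$ required in the previous paragraph.

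I expect no genuine obstacle here; the delicate points are bookkeeping only. One is fixing the constant so that the time-map of the height-function flow is exactly $D_2$ and not a rotation through some other angle, which is where the factor $\frac12$ in $\frac12\omega_{std}$ enters. The other is the structural identity $(D_1,D_2)L_1(p,q)=L_2(p,q)$, which follows from the conjugacy relation $(R_t,R_t)=(D_1,D_2)^{-1}(R_t,R_{-t})(D_1,D_2)$ together with $\Gamma_2=(D_1,D_2)\Gamma_1$: carrying the $(R_t,R_t)$-orbit of $\Gamma_1$ through $(D_1,D_2)$ produces precisely the $(R_t,R_{-t})$-orbit of $\Gamma_2$. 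If one prefers to avoid the explicit generator, an even cleaner route bypasses the computation entirely: $SO(3)$ is connected and acts on $(S^2,\frac12\omega_{std})$ by symplectomorphisms, so because $H^1(S^2;\mathbb{R})=0$ the identity component of the symplectomorphism group coincides with the Hamiltonian group, forcing $D_2\in\mathrm{Ham}(S^2)$ and therefore $(D_1,D_2)\in\mathrm{Ham}(S^2\times S^2)$.
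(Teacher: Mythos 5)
Your proof is correct and takes essentially the same route as the paper: the paper simply records that $(D_1,D_2)L_1(p,q)=L_2(p,q)$ and that $(D_1,D_2)$ is a Hamiltonian diffeomorphism, and concludes immediately. Your write-up merely fills in the details the paper leaves implicit (the height-function Hamiltonian generating $D_2$ on $\left(S^2,\frac12\omega_{std}\right)$, and the derivation of the set-level equality from the conjugacy relation and $\Gamma_2=(D_1,D_2)\Gamma_1$), all of which check out.
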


\begin{subsection}{Case 1: $0<p^2< q^4$}

In this case neither of the components of $\Gamma_2$ passes through $-e_1$.

As in \cite[Proof of Proposition 2.4]{OU}, we consider the symplectomorphism
\begin{align*}
    \psi_{-1}: \left(B^2(1), 2dx\wedge dy\right) &\to\left(S^2\backslash\{-e_1\}, \frac{1}{2}\omega_{std}\right)\\
    re^{i\theta} &\mapsto\left(1-2r^2, 2r\sqrt{1-r^2}\cos(\theta), 2r\sqrt{1-r^2}\sin(\theta)\right)
\end{align*}
where $B^2(1)$ is the open ball in $\mathbb{C}$ with radius $1$.
Note that $\psi_{-1}(e^{it}\cdot re^{i\theta})=R_t\psi_{-1}(re^{i\theta})$.
Let $\tilde{\Gamma}_2=\left(\psi_{-1}\times\psi_{-1}\right)^{-1}\left(\Gamma_2\right)$.
Then $L_2(p, q)$ is symplectomorphic to the Lagrangian torus $\tilde{L}_2(p, q)$ in $\left(B^2(1), 2dx\wedge dy\right)\times\left(B^2(1), 2dx\wedge dy\right)$, that is the orbit of the curve $\tilde{\Gamma}_2$ under the $S^1$-action $\left(e^{it}, e^{-it}\right)$. 

Now we describe $\tilde{L}_2(p, q)$ in the way in \cite{EP}. Note that the $S^1$-action $\left(e^{it}, e^{-it}\right)$ is the Hamiltonian flow of the function
\begin{align*}
    H: \left(\mathbb{C}^2, idz_1\wedge d\bar{z}_1+idz_2\wedge d\bar{z}_2\right) &\to\mathbb{R}\\
    (z_1, z_2) &\mapsto |z_1|^2-|z_2|^2
\end{align*}

\begin{claim}
    $H(\tilde{\Gamma}_2)=-p$.
\end{claim}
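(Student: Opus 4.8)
The plan is to observe that $H$ evaluated at a point of $\tilde{\Gamma}_2$ depends only on the first coordinates of the corresponding point of $\Gamma_2\subset S^2\times S^2$, and then to read these coordinates off from the explicit parametrization. The whole statement is thus a short bookkeeping computation once the role of $\psi_{-1}$ is made precise.

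First I would record how $\psi_{-1}$ interacts with the modulus function. Since $\psi_{-1}(re^{i\theta})=\left(1-2r^2,\,2r\sqrt{1-r^2}\cos(\theta),\,2r\sqrt{1-r^2}\sin(\theta)\right)$ and $|re^{i\theta}|^2=r^2$, any $z\in B^2(1)$ with $\psi_{-1}(z)=(s_1,s_2,s_3)\in S^2$ satisfies $|z|^2=\frac{1-s_1}{2}$. Writing $(z_1,z_2)=\left(\psi_{-1}\times\psi_{-1}\right)^{-1}(v,D_2w)$ for a point $(v,D_2w)\in\Gamma_2$, this gives $|z_1|^2=\frac{1-v_1}{2}$ and $|z_2|^2=\frac{1-(D_2w)_1}{2}$, whence
\[
    H(z_1,z_2)=|z_1|^2-|z_2|^2=\frac{(D_2w)_1-v_1}{2}.
\]
So the claim reduces to computing the difference of the first coordinates of the two $S^2$-factors along $\Gamma_2$.

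Next I would substitute the explicit form of $\Gamma_2=\left(D_1,D_2\right)\Gamma_1$. Because $D_1$ is the identity and $D_2$ negates the first two coordinates, a point $(v,D_2w)\in\Gamma_2$ has $v_1=p+\frac{\sqrt{q^2-p^2}\sqrt{1-q^2}}{q}\sin(\theta)$, while $(D_2w)_1=-p+\frac{\sqrt{q^2-p^2}\sqrt{1-q^2}}{q}\sin(\theta)$, the latter being the negative of the first coordinate $p-\frac{\sqrt{q^2-p^2}\sqrt{1-q^2}}{q}\sin(\theta)$ of $w$ in the parametrization $\gamma_1$ of $\Gamma_1$. Subtracting, the $\sin(\theta)$ terms cancel and $(D_2w)_1-v_1=-2p$, independently of $\theta$; plugging this into the displayed formula gives $H(z_1,z_2)=-p$ on all of $\tilde{\Gamma}_2$, as claimed.

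There is no serious obstacle here, as the argument is purely computational. The one point to be careful about is the direction of the identity $|z|^2=\frac{1-s_1}{2}$ — equivalently, that $\psi_{-1}$ sends the origin to $+e_1$ and the boundary circle to $-e_1$ — which is exactly what guarantees that $H$ reads off the first coordinates with the correct signs and that the dependence on the rotation parameter $\theta$ drops out, leaving the constant value $-p$.
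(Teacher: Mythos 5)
Your proof is correct and follows essentially the same route as the paper: the identity $|z|^2=\frac{1-s_1}{2}$ you derive is exactly the paper's statement that $H$ equals $h\circ(\psi_{-1}\times\psi_{-1})$ with $h(v,w)=-\frac{1}{2}(v_1-w_1)$, and both arguments then read off the first coordinates $v_1=p+\frac{\sqrt{q^2-p^2}\sqrt{1-q^2}}{q}\sin(\theta)$ and $w_1=-p+\frac{\sqrt{q^2-p^2}\sqrt{1-q^2}}{q}\sin(\theta)$ along $\Gamma_2$ so that the $\sin(\theta)$ terms cancel. No gaps; your closing remark about the orientation of $\psi_{-1}$ (origin to $+e_1$) correctly pins down the sign.
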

\begin{proof}
Consider 
\begin{align*}
    h: S^2\times S^2 &\to\mathbb{R}\\
    \left((v_1, v_2, v_3), (w_1, w_2, w_3)\right) &\mapsto -\frac{1}{2}(v_1-w_1)
\end{align*}
Then $h\circ\left(\psi_{-1}\times\psi_{-1}\right)=H$. Take $(v, w)\in\Gamma_2$. Then $v_1=p+\frac{\sqrt{q^2-p^2}\sqrt{1-q^2}}{q}\sin(\theta)$ and $w_1=-p+\frac{\sqrt{q^2-p^2}\sqrt
{1-q^2}}{q}\sin(\theta)$. Thus $h(v, w)=-p$, furthermore, $H(\tilde{\Gamma}_2)=h(\Gamma_2)=-p$
\end{proof}
 Consider the function 
\begin{align*}
    F: \mathbb{C}^2 &\to\mathbb{C}\\
    (z_1, z_2) &\mapsto z_1z_2
\end{align*}
Let $\Gamma=F(\tilde{\Gamma}_2)$. 
It is easy to see that $\tilde{L}_2(p, q)\subset F^{-1}(\Gamma)\cap H^{-1}(\{-p\})$. By \cite[Lemma 4.2 A]{EP}, $F^{-1}(\Gamma)\cap H^{-1}(\{-p\}))$ is a Lagrangian torus since $\Gamma$ is an embedded curve in $B^2(1)$ and $p\ne 0$. Thus $\tilde{L}_2(p, q)= F^{-1}(\Gamma)\cap H^{-1}(\{-p\})$.

\begin{remark}
    Since the radius of $B^2(1)$ is $1$, we can restrict $F$ to $B^2(1)\times B^2(1)\to B^2(1)$.
\end{remark}

Given a function $K:\left(B^2(1), 2dx\wedge dy\right)\to\mathbb{R}$, one can show that, for any $(z_1, z_2)\in B^2(1)\times B^2(1)$,
\begin{equation*}
    dF(X_{K\circ F}(z_1, z_2))=\frac{1}{2}(|z_1|^2+|z_2|^2)\left(-\frac{\partial K}{\partial y}\frac{\partial}{\partial x}+\frac{\partial K}{\partial x}\frac{\partial}{\partial y}\right)
\end{equation*}
by direct computation. Since $K\circ F$ is invariant under the flow of $X_H$, we have $dH(X_{K\circ F})=0$. Thus we can restrict $X_{K\circ F}$ to $H^{-1}(\{-p\})$. Then
\begin{equation*}
    dF\left(X_{K\circ F}|_{H^{-1}(\{-p\})}\right)=\frac{1}{2}\sqrt{p^2+4|z_1z_2|^2}\left(-\frac{\partial K}{\partial y}\frac{\partial}{\partial x}+\frac{\partial K}{\partial x}\frac{\partial}{\partial y}\right)
\end{equation*}
Define the vector field $V^{p, K}$ on $B^2(1)$ by
\begin{equation*}
    V^{p, K}(z)=\frac{1}{2}\sqrt{p^2+4|z|^2}\left(-\frac{\partial K}{\partial y}\frac{\partial}{\partial x}+\frac{\partial K}{\partial x}\frac{\partial}{\partial y}\right)
\end{equation*}
We have 
\begin{equation}\label{der}
    dF(X_{K\circ F}(z_1, z_2))=V^{p, K}(F(z_1, z_2)),\:\text{for}\: (z_1, z_2)\in H^{-1}(\{-p\})
\end{equation}

\begin{lemma}\label{lemma0}
    Let $\phi^{t, p, K}$ be the flow of $V^{p, K}$. Then $\phi_{K\circ F}^t(\tilde{L}_2(p, q)) =F^{-1}\left(\phi^{t, p, K}(\Gamma)\right)\cap H^{-1}(\{-p\})$.
\end{lemma}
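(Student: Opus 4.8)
The plan is to show that the map $F$ semiconjugates the Hamiltonian flow $\phi_{K\circ F}^t$, restricted to the level set $H^{-1}(\{-p\})$, to the flow $\phi^{t, p, K}$ of $V^{p, K}$ on $B^2(1)$, and then to transport the identity $\tilde{L}_2(p, q)=F^{-1}(\Gamma)\cap H^{-1}(\{-p\})$ through this semiconjugacy. Two preliminary facts are needed. First, the flow $\phi_{K\circ F}^t$ preserves the level set $H^{-1}(\{-p\})$: this is immediate from the relation $dH(X_{K\circ F})=0$ already observed in the discussion preceding (\ref{der}), which holds because $F(e^{it}z_1, e^{-it}z_2)=z_1z_2=F(z_1, z_2)$ makes $K\circ F$ invariant under the flow of $X_H$. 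Second, equation (\ref{der}) states that on this level set $dF(X_{K\circ F})=V^{p, K}\circ F$.

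Next I would establish the intertwining relation $F\circ\phi_{K\circ F}^t=\phi^{t, p, K}\circ F$ on $H^{-1}(\{-p\})$. Given $z_0\in H^{-1}(\{-p\})$, set $z(t)=\phi_{K\circ F}^t(z_0)$; by the first fact $z(t)$ remains in $H^{-1}(\{-p\})$, so (\ref{der}) applies and yields
\begin{equation*}
    \frac{d}{dt}F(z(t))=dF\left(X_{K\circ F}(z(t))\right)=V^{p, K}(F(z(t))).
\end{equation*}
Thus $t\mapsto F(z(t))$ is an integral curve of $V^{p, K}$ starting at $F(z_0)$, and by uniqueness of solutions to this ODE it coincides with $\phi^{t, p, K}(F(z_0))$, which is exactly the claimed intertwining.

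Finally I would deduce the two inclusions. For the inclusion $\subseteq$, take $z_0\in\tilde{L}_2(p, q)$, so that $F(z_0)\in\Gamma$ and $H(z_0)=-p$; then $\phi_{K\circ F}^t(z_0)\in H^{-1}(\{-p\})$ by preservation of $H$, and $F(\phi_{K\circ F}^t(z_0))=\phi^{t, p, K}(F(z_0))\in\phi^{t, p, K}(\Gamma)$ by the intertwining, so $\phi_{K\circ F}^t(z_0)$ lies in the right-hand set. For the reverse inclusion, take $w$ in the right-hand set and set $z_0=\phi_{K\circ F}^{-t}(w)$; then $H(z_0)=-p$ and $F(z_0)=\phi^{-t, p, K}(F(w))\in\Gamma$, whence $z_0\in\tilde{L}_2(p, q)$ and $w=\phi_{K\circ F}^t(z_0)$.

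The only genuinely delicate point is the preservation of $H^{-1}(\{-p\})$ by $\phi_{K\circ F}^t$, since (\ref{der}) is valid only on that level set and the intertwining computation would break down if the flow were to leave it. Everything else is a formal consequence of the invariance of $F$ under the $(e^{it}, e^{-it})$-action together with uniqueness of integral curves, so I expect no essential obstacle beyond bookkeeping about the domains on which the two flows are defined.
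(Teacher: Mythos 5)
Your proposal is correct, and its first half coincides with the paper's: both establish the intertwining relation $F\circ\phi_{K\circ F}^t=\phi^{t,p,K}\circ F$ on $H^{-1}(\{-p\})$ and deduce the forward inclusion from it, although you justify the intertwining more carefully than the paper, which asserts it directly from equation (\ref{der}) and leaves the invariance of the level set and the uniqueness-of-integral-curves argument implicit. The genuine difference is in the reverse inclusion. You argue dynamically: given $w$ in the right-hand side, you flow backwards, apply the intertwining at time $-t$ to get $F\left(\phi_{K\circ F}^{-t}(w)\right)\in\Gamma$, and invoke the identity $\tilde{L}_2(p,q)=F^{-1}(\Gamma)\cap H^{-1}(\{-p\})$ established just before the lemma. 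The paper instead argues topologically: by \cite[Lemma 4.2 A]{EP}, the set $F^{-1}\left(\phi^{t,p,K}(\Gamma)\right)\cap H^{-1}(\{-p\})$ is itself a Lagrangian torus for every $t$ (since $\phi^{t,p,K}(\Gamma)$ is an embedded curve and $-p\neq 0$), so $\phi_{K\circ F}^t$ is an embedding of a compact torus into a connected torus of the same dimension and must therefore be onto. Your route is more elementary in that it uses the Eliashberg--Polterovich lemma only once, at $t=0$ (where it is already built into the identification of $\tilde{L}_2(p,q)$), and replaces the invariance-of-domain step by invertibility of the flow; the paper's route avoids any discussion of backward trajectories but leans on \cite{EP} at every time $t$. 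Both arguments share the bookkeeping issue you flag at the end: one must know that the flows upstairs and downstairs are defined for the relevant times, which holds because the trajectories in question remain in compact subsets of the respective domains (for instance after cutting off $K$ away from the region swept out by the isotopy of $\Gamma$), and neither your proof nor the paper's spells this out.
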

\begin{proof}
    By equation \ref{der}, we have $F\circ\phi_{K\circ F}^t(z_1, z_2)=\phi^{t, p, K}\circ F(z_1, z_2)$ for $(z_1, z_2)\in H^{-1}(\{-p\})$. Take $(z_1, z_2)\in\tilde{L}_2(p, q)$. Then $F(z_1, z_2)\in\Gamma$ and $F\circ\phi_{K\circ F}^t(z_1, z_2)\in\phi^{t, p, K}(\Gamma)$. Thus $\phi_{K\circ F}^t(z_1, z_2)\in F^{-1}\left(\phi^{t, p, K}(\Gamma)\right)$.
    Since $H\left(\phi_{K\circ F}^t(z_1, z_2)\right)=H(z_1, z_2)$, then $\phi_{K\circ F}^t(z_1, z_2)\in H^{-1}(\{-p\})$. Thus $\phi_{K\circ F}^t(\tilde{L}_2(p, q))\subset F^{-1}\left(\phi^{t, p, K}(\Gamma)\right)\cap H^{-1}(\{-p\})$. 
    
    Since $-p\ne 0$, $F^{-1}\left(\phi^{t, p, K}(\Gamma)\right)\cap H^{-1}(\{-p\})$ is a Lagrangian torus for each $t$ by \cite{EP}. Then $\phi_{K\circ F}^t$ is an embedding from torus $\tilde{L}_2(p, q)$ to torus $F^{-1}\left(\phi^{t, p, K}(\Gamma)\right)\cap H^{-1}(\{-p\})$. Thus $\phi_{K\circ F}^t(\tilde{L}_2(p, q)) =F^{-1}\left(\phi^{t, p, K}(\Gamma)\right)\cap H^{-1}(\{-p\})$.
\end{proof}

\begin{proposition}\label{prop1}
     There is a smooth function $K$ such that $\phi^{1, p, K}(\Gamma)=S^1(r)$ for some $r$ where $S^1(r)=\{re^{i\theta}\in\mathbb{C}\mid 0\le\theta\le2\pi\}$. 
\end{proposition}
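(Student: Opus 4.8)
The plan is to recognize $V^{p,K}$ as a Hamiltonian vector field for a rotationally symmetric area form and then to straighten $\Gamma$ to a centered circle by an area-preserving isotopy. A direct check of the relation $\iota_{V^{p,K}}\omega_p=-dK$ shows that $V^{p,K}$ is exactly the Hamiltonian vector field of $K$ with respect to the area form $\omega_p=\frac{2}{\sqrt{p^2+4|z|^2}}\,dx\wedge dy$ on $B^2(1)$: the pointwise scalar $\frac12\sqrt{p^2+4|z|^2}$ appearing in $V^{p,K}$ is precisely the reciprocal density relating $\omega_p$ to $dx\wedge dy$, so that $-\frac{\partial K}{\partial y}\frac{\partial}{\partial x}+\frac{\partial K}{\partial x}\frac{\partial}{\partial y}$ is the $dx\wedge dy$-Hamiltonian field and multiplying by this scalar produces the $\omega_p$-Hamiltonian field. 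Consequently every flow $\phi^{t,p,K}$ preserves $\omega_p$, and it suffices to produce a $K$ whose flow carries $\Gamma$ to some circle $S^1(r)$ centered at the origin. Since $\omega_p$ is rotationally symmetric, the circles $S^1(r)$ are the regular level sets of the area function $A(\rho)=\int_{|z|\le\rho}\omega_p=\pi(\sqrt{p^2+4\rho^2}-|p|)$, which will pin down $r$ by an area computation.

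First I would record the geometry of $\Gamma$. Using $|z_1|^2=\tfrac{1-v_1}{2}$ together with the parametrization of $\Gamma_2$ coming from Proposition \ref{prop00}, one checks that $\Gamma=F(\tilde\Gamma_2)$ is an embedded loop that avoids the origin (the locus $z_1z_2=0$ is never met precisely because $p^2\ne q^4$) and is invariant under complex conjugation. The point special to Case 1 is that, for each factor, the curve $v_2+iv_3=\sqrt{1-q^2}\cos\theta+i\big(\sqrt{q^2-p^2}-\tfrac{p\sqrt{1-q^2}}{q}\sin\theta\big)$ is an ellipse centered at $(0,\sqrt{q^2-p^2})$ which, when $p^2<q^4$, lies in the open upper half-plane and therefore does not wind around $0$. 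Hence each factor $\psi_{-1}^{-1}$ contributes winding number zero, the product $\Gamma$ has winding number zero about the origin, and $\Gamma$ bounds an embedded disk $D_\Gamma$ with $0\notin D_\Gamma$. I would then fix $r\in(0,1)$ by the area-matching condition $A(r)=\int_{D_\Gamma}\omega_p$, which has a unique solution because $0<\int_{D_\Gamma}\omega_p<A(1)$.

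Next I would construct the isotopy. Both $\Gamma$ and $S^1(r)$ are embedded loops in the open disk $B^2(1)$ bounding $\omega_p$-regions of equal area on the side carrying the origin, by the choice of $r$; so the classification of embedded curves in a surface up to area-preserving isotopy — a Moser argument, arranged to be compactly supported away from $\partial B^2(1)$ — produces an $\omega_p$-area-preserving isotopy $\Phi_t$ with $\Phi_0=\mathrm{id}$ and $\Phi_1(\Gamma)=S^1(r)$. Because $B^2(1)$ is a disk, every area-preserving isotopy from the identity is generated by a Hamiltonian; rewriting the corresponding Hamiltonian vector field through $\iota_{V^{p,K}}\omega_p=-dK$ exhibits $\Phi_t$ as $\phi^{t,p,K}$, and evaluating at $t=1$ yields the proposition.

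The main obstacle is that $\Gamma$ does not enclose the origin while the target $S^1(r)$ does, so the straightening cannot keep the origin on a fixed side of the curve: at some intermediate time $\Phi_t(\Gamma)$ must sweep across $0$. Two points handle this. First, this is permissible in the present framework precisely because $p\ne0$: by the mechanism in Lemma \ref{lemma0} the set $F^{-1}(\Phi_t(\Gamma))\cap H^{-1}(\{-p\})$ is a smooth Lagrangian torus for every $t$ even when $\Phi_t(\Gamma)$ passes through the origin, so nothing degenerates upstairs. Second, one must verify that the generating Hamiltonian $K$ is smooth across $0$ and that its flow preserves $B^2(1)$; both follow from taking the Moser isotopy compactly supported in the interior. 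A secondary point is whether $K$ may be taken autonomous: the construction above naturally yields a time-dependent family, which already suffices since equation \ref{der} and Lemma \ref{lemma0} hold pointwise in $t$, while with additional care one can choose $K$ adapted to a single interpolating foliation between $\Gamma$ and $S^1(r)$ to make it time-independent.
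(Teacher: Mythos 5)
Your proof is correct and takes essentially the same approach as the paper: recognize $V^{p, K}$ as the Hamiltonian vector field of $K$ for the auxiliary form $\omega^p$ (smooth at the origin since $p\ne 0$), fix $r$ by matching $\omega^p$-areas, and invoke the standard fact that embedded circles in the disk bounding equal areas are Hamiltonian isotopic. The extra material you supply --- the winding-number analysis showing $\Gamma$ does not enclose the origin, the Moser-type construction of the isotopy, and the remarks on time-dependent versus autonomous $K$ --- consists of details the paper leaves implicit (the winding-number fact appears only implicitly in its Appendix), not a different route.
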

\begin{proof}
    We consider the symplectic form 
    \begin{equation*}
        \omega^p=\frac{2r}{\sqrt{p^2+4r^2}}dr\wedge d\phi=\frac{2}{\sqrt{p^2+4x^2+4y^2}}dx\wedge dy
    \end{equation*} on $B^2(1)$.
    Since we assume $x+y\ne 1$, i.e. $p\ne 0$ in Theorem \ref{A}, $\omega^p$ is defined at $(0, 0)$. One can show that, for any $K$, the vector field $V^{p, K}$ is the Hamiltonian vector field of $K$ under the symplectic form $\omega^p$. Then we choose $r$ such that $\Gamma$ and $S^1(r)$ enclose the same $\omega^p$-area. This implies that $\Gamma$ and $S^1(r)$ are Hamiltonian isotopic in $\left(B^2(1), \omega^p\right)$. Thus there is a Hamiltonian $K$ such that $\phi^{1, p, K}(\Gamma)=S^1(r)$.
\end{proof}

\begin{lemma}\label{lemma1}
    The Lagrangian torus $\left(\psi_{-1}\times\psi_{-1}\right)\left(F^{-1}(S^1(r))\cap H^{-1}(\{-p\})\right)$ is the toric fiber 
    \begin{equation*}
    T\left(\frac{1+p-\sqrt{p^2+4r^2}}{2}, \frac{1-p-\sqrt{p^2+4r^2}}{2}\right).
    \end{equation*}
\end{lemma}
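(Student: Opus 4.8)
The statement is purely computational once one observes that, after the reduction to the product model $B^2(1)\times B^2(1)$, both conditions cutting out $F^{-1}(S^1(r))\cap H^{-1}(\{-p\})$ depend only on the moduli $|z_1|,|z_2|$. First I would make this set explicit. Writing $z_1=\sqrt{a}\,e^{i\theta_1}$ and $z_2=\sqrt{b}\,e^{i\theta_2}$ with $a=|z_1|^2$ and $b=|z_2|^2$, the membership $F(z_1,z_2)=z_1z_2\in S^1(r)$ is equivalent to $|z_1z_2|=r$, i.e. $ab=r^2$, while $H(z_1,z_2)=-p$ reads $a-b=-p$. Crucially, once $a$ and $b$ are fixed the arguments $\theta_1,\theta_2$ are unconstrained, so this set is a product of two circles in the radial variables.

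Next I would solve the system $ab=r^2$, $a-b=-p$. Substituting $b=a+p$ gives $a^2+pa-r^2=0$, whence
\[
a=\frac{-p+\sqrt{p^2+4r^2}}{2},\qquad b=\frac{p+\sqrt{p^2+4r^2}}{2},
\]
the positive root being forced by $a,b\ge 0$ together with $ab=r^2>0$ (here $r>0$ since the embedded closed curve $\Gamma$ encloses positive $\omega^p$-area). Applying $\psi_{-1}\times\psi_{-1}$ and recalling that $\psi_{-1}(\sqrt{a}\,e^{i\theta_1})$ has first coordinate $1-2a$, every point $(v,w)$ of the image satisfies $v_1=1-2a$ and $w_1=1-2b$, independently of $\theta_1,\theta_2$. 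Substituting the above values gives $\tfrac12 v_1=\tfrac{1+p-\sqrt{p^2+4r^2}}{2}$ and $\tfrac12 w_1=\tfrac{1-p-\sqrt{p^2+4r^2}}{2}$, which are exactly the two coordinates of the asserted toric fiber, since $T(\xi,\zeta)=\mu^{-1}(\xi,\zeta)=\{(v,w):\tfrac12 v_1=\xi,\ \tfrac12 w_1=\zeta\}$.

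To finish, I would verify surjectivity onto the whole fiber rather than a subset: as $\theta_1$ ranges over $[0,2\pi)$ the last two coordinates of $\psi_{-1}(\sqrt{a}\,e^{i\theta_1})$ sweep out the entire latitude circle $\{v_1=1-2a\}\subset S^2$, and likewise for $w$, so the image of the product of circles is the full product of latitude circles. The one point that is not a mechanical substitution is the domain issue: $\psi_{-1}$ is defined only on $B^2(1)$, so I must know $0<a<1$ and $0<b<1$, which also guarantees the fiber is interior (the latitudes being neither pole). Positivity is immediate; the upper bounds $a,b<1$ amount to $r^2<1-|p|$ and are ensured by the construction in Case 1, where $F^{-1}(S^1(r))\cap H^{-1}(\{-p\})$ was arranged to lie inside $B^2(1)\times B^2(1)$. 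This is the step where the case hypothesis $0<p^2<q^4$ actually enters, and it is the only place the argument is more than bookkeeping.
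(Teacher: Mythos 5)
Your proof is correct and follows essentially the same route as the paper: both reduce to solving the radial system $r_1r_2=r$, $r_1^2-r_2^2=-p$ (the paper verifies the two inclusions, you parametrize the solution set and check it maps onto the full fiber, which amounts to the same computation). Your final remark about needing $r^2<1-|p|$ so that the solution lies in $B^2(1)\times B^2(1)$ is a detail the paper leaves implicit in its standing assumption that $(\xi,\zeta)$ is an interior point of $P_1$, and making it explicit is a reasonable refinement rather than a divergence.
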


\begin{proof}
    Let $(\xi, \zeta)$ with $\xi=\frac{1+p-\sqrt{p^2+4r^2}}{2}$ and $\zeta=\frac{1-p-\sqrt{p^2+4r^2}}{2}$ be an interior point in $P_1$,  the moment polytope of the standard toric structure on $S^2\times S^2$. Then the toric fiber over $(\xi, \zeta)$ is 
    \begin{equation*}
        T(\xi,\zeta)=\left\{\left(\begin{bmatrix}
            2\xi\\
            \sqrt{1-4\xi^2}\cos(\theta_1)\\
            \sqrt{1-4\xi^2}\sin(\theta_1)
        \end{bmatrix},
        \begin{bmatrix}
            2\zeta\\
            \sqrt{1-4\zeta^2}\cos(\theta_2)\\
            \sqrt{1-4\zeta^2}\sin(\theta_2)
        \end{bmatrix}
        \right)\in S^2\times S^2\:\middle|\: 0\le\theta_1, \theta_2\le 2\pi\right\}
    \end{equation*}
    Then
    \begin{equation*}
    \left(\psi_{-1}\times\psi_{-1}\right)^{-1}(T(\xi, \zeta))=\left\{\left(\sqrt{\frac{1-2\xi}{2}}e^{i\theta_1}, \sqrt{\frac{1-2\zeta}{2}}e^{i\theta_2}\right)\in\mathbb{C}^2\:\middle|\:  0\le\theta_1, \theta_2\le 2\pi\right\}
    \end{equation*}
    which is the fiber over $(1-2\xi, 1-2\zeta)$ under the moment map of $\mathbb{C}^2$.

    One can easily check that $\left(\psi_{-1}\times\psi_{-1}\right)^{-1}(T(\xi, \zeta))\subset F^{-1}(S^{1}(r))\cap H^{-1}(\{-p\})$.

    Let $(z_1, z_2)$ be a point in $F^{-1}(S^{1}(r))\cap H^{-1}(\{-p\})$. Write $z_j$ as $r_je^{i\theta_j}$ for $j=1, 2$. Then $r_1r_2=r$ and $r_1^2-r_2^2=-p$. We can solve $r_1=\sqrt{\frac{-p+\sqrt{p^2+4r^2}}{2}}=\sqrt{\frac{1-2\xi}{2}}$ and $r_2=\sqrt{\frac{p+\sqrt{p^2+4r^2}}{2}}=\sqrt{\frac{1-2\zeta}{2}}$. Thus $(z_1, z_2)\in \left(\psi_{-1}\times\psi_{-1}\right)^{-1}(T(\xi, \zeta))$.
\end{proof}

\begin{proposition}
    The Lagrangian torus $L_1(x, y)$ is Hamiltonian isotopic to a toric fiber of the standard toric structure on $S^2\times S^2$.
\end{proposition}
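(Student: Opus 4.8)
The plan is to assemble the reductions already established for Case 1 into a single chain of Hamiltonian isotopies connecting $L_1(p,q)$ to an explicit toric fiber. First, Proposition \ref{prop0} gives that $L_1(p,q)$ is Hamiltonian isotopic to $L_2(p,q)$ via the Hamiltonian diffeomorphism $(D_1,D_2)$, so it suffices to connect $L_2(p,q)$ to a toric fiber. Because we are in Case 1, neither component of $\Gamma_2$ meets $-e_1$, so $\psi_{-1}\times\psi_{-1}$ is a symplectomorphism defined on an open neighborhood of $L_2(p,q)$ carrying it onto $\tilde L_2(p,q)=F^{-1}(\Gamma)\cap H^{-1}(\{-p\})$ inside $B^2(1)\times B^2(1)$.

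Next I would invoke Proposition \ref{prop1} to produce a smooth $K$ on $B^2(1)$ with $\phi^{1,p,K}(\Gamma)=S^1(r)$ for the appropriate $r$, and feed this $K$ into Lemma \ref{lemma0}. That lemma shows the time-one flow of $K\circ F$ sends $\tilde L_2(p,q)$ to $F^{-1}(S^1(r))\cap H^{-1}(\{-p\})$; this is a genuine Hamiltonian isotopy inside the ball product. Finally, Lemma \ref{lemma1} identifies $(\psi_{-1}\times\psi_{-1})\bigl(F^{-1}(S^1(r))\cap H^{-1}(\{-p\})\bigr)$ with the toric fiber $T\bigl(\tfrac{1+p-\sqrt{p^2+4r^2}}{2},\tfrac{1-p-\sqrt{p^2+4r^2}}{2}\bigr)$. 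Composing these isotopies then yields the claim.

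The step that requires care — the main obstacle — is that the isotopy $\phi^t_{K\circ F}$ lives on the open product $B^2(1)\times B^2(1)$, whereas I want a Hamiltonian isotopy of the closed manifold $S^2\times S^2$; under $\psi_{-1}\times\psi_{-1}$ the Hamiltonian $K\circ F$ is only defined on the open dense set $(S^2\setminus\{-e_1\})\times(S^2\setminus\{-e_1\})$ and need not extend smoothly across the punctures. To handle this I would note that, by Lemma \ref{lemma0}, the tori $F^{-1}(\phi^{t,p,K}(\Gamma))\cap H^{-1}(\{-p\})$ are the images $\phi^t_{K\circ F}(\tilde L_2(p,q))$ of a flow on $B^2(1)\times B^2(1)$, so for $t\in[0,1]$ they sweep out a set whose closure is a compact subset of $B^2(1)\times B^2(1)$, bounded away from the punctures. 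Multiplying $K\circ F$ by a cutoff equal to $1$ on a neighborhood of this compact set and vanishing near the boundary produces a compactly supported Hamiltonian whose flow agrees with $\phi^t_{K\circ F}$ along the tori; pushing forward by $\psi_{-1}\times\psi_{-1}$ and extending by zero across $\{-e_1\}$ in each factor gives a bona fide Hamiltonian isotopy of $S^2\times S^2$ realizing the required motion of $L_2(p,q)$. Concatenating this with the isotopy from Proposition \ref{prop0} exhibits $L_1(p,q)$ as Hamiltonian isotopic to the toric fiber above, completing Case 1.
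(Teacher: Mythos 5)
Your proposal is correct and follows essentially the same chain as the paper's proof: Proposition \ref{prop0}, then the identification via $\psi_{-1}\times\psi_{-1}$, then Proposition \ref{prop1} fed into Lemma \ref{lemma0}, then Lemma \ref{lemma1}. The only difference is that you additionally spell out the cutoff argument needed to turn the Hamiltonian isotopy on the open set $B^2(1)\times B^2(1)$ (equivalently, on $(S^2\setminus\{-e_1\})\times(S^2\setminus\{-e_1\})$) into one on the closed manifold $S^2\times S^2$ --- a standard point the paper leaves implicit, and your treatment of it is sound.
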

\begin{proof}
    By Lemma \ref{lemma0} and Proposition \ref{prop1}, $\tilde{L}_{2}(p, q)$ is Hamiltonian isotopic to $F^{-1}(S^1(r))\cap H^{-1}(\{-p\})$ by $\phi_{K\circ F}^{t}$ where $K$ is a Hamiltonian such that $\Gamma$ and $S^1(r)$ are Hamiltonian isotopic in $\left(B^2(1), \omega^p\right)$. Then $L_2(p, q)=(\psi_{-1}\times\psi_{-1})\left(\tilde{L}_2(p, q)\right)$ is Hamiltonian isotopic to $\left(\psi_{-1}\times\psi_{-1}\right)\left(F^{-1}(S^1(r))\cap H^{-1}(\{-p\})\right)$ that is a toric fiber by Lemma \ref{lemma1}. By Proposition \ref{prop0}, we have $L_1(x, y)=L_1(p, q)$ is Hamiltonian isotopic to a toric fiber.
\end{proof}

Next we are going to figure out which fiber $L_1(x, y)$ is Hamiltonian isotopic to. By Proposition \ref{prop1} $\Gamma$ and $S^1(r)$ should enclose the same $\omega^p$-area where $\omega^p=\frac{2r}{\sqrt{p^2+4r^2}}dr\wedge d\phi$.

\begin{proposition}
    The $\omega^p$-area enclosed by $\Gamma$ is $2\pi-2\pi q$ for $0<p^2<q^4$.
\end{proposition}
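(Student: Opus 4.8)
The plan is to evaluate the enclosed area by Stokes' theorem, pulling the computation back to the parameter $\theta$ of the given parametrization of $\Gamma$ and reducing everything to one trigonometric integral. First I would exhibit a primitive for $\omega^p$: since $\omega^p=\frac{2r}{\sqrt{p^2+4r^2}}dr\wedge d\phi$, the $1$-form $\lambda=\frac{1}{2}\left(\sqrt{p^2+4r^2}-|p|\right)d\phi$ satisfies $d\lambda=\omega^p$ and, because $p\ne0$, is smooth at the origin. Hence the enclosed area equals $\oint_\Gamma\lambda$.

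Next I would pull back along $\Gamma=F(\tilde\Gamma_2)$, $F(z_1,z_2)=z_1z_2$. Writing $z_1,z_2$ for $\psi_{-1}^{-1}$ of the two components of $\Gamma_2$, one has $|z_1|^2=\frac{1-v_1}{2}$ and $|z_2|^2=\frac{1-w_1}{2}$; and on $H^{-1}(\{-p\})$, from $|z_1|^2-|z_2|^2=-p$ and $r^2=|z_1|^2|z_2|^2$ one gets the identity $\sqrt{p^2+4r^2}=|z_1|^2+|z_2|^2$. Since $v_1+w_1=2A\sin\theta$ along $\Gamma_2$, where $A=\frac{\sqrt{q^2-p^2}\sqrt{1-q^2}}{q}$, this collapses to the clean formula $\sqrt{p^2+4r^2}=1-A\sin\theta$; the hypothesis $p^2<q^4<1$ forces $A<1$, so $1-A\sin\theta>0$ throughout. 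Thus $\oint_\Gamma\lambda=\frac{1}{2}\int_0^{2\pi}(1-A\sin\theta-|p|)\frac{d\phi}{d\theta}\,d\theta$, where $\phi=\arg z_1+\arg z_2$.

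I would then show that $\Gamma$ has winding number $0$ about the origin, so the $(1-|p|)$-term drops out. Each summand $\arg z_j$ is the argument of $(v_2,v_3)$ (resp.\ $(w_2,w_3)$), and this planar curve is an ellipse centered at $(0,\sqrt{q^2-p^2})$ with $v_3$-semiaxis $\frac{|p|\sqrt{1-q^2}}{q}$. The inequality $p^2<q^4$ is precisely what guarantees $\sqrt{q^2-p^2}>\frac{|p|\sqrt{1-q^2}}{q}$, pushing the ellipse into the half-plane $v_3>0$; it therefore does not enclose the origin, so $\oint_\Gamma d\phi=0$. The area then reduces to $-\frac{A}{2}\int_0^{2\pi}\sin\theta\,\frac{d\phi}{d\theta}\,d\theta$.

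The only real obstacle is this last integral. Differentiating the arguments gives $\frac{d\phi}{d\theta}=\frac{-B+Aq\sin\theta}{1-(p+A\sin\theta)^2}+\frac{B+Aq\sin\theta}{1-(p-A\sin\theta)^2}$ with $B=\frac{p(1-q^2)}{q}$. The reflection $\theta\mapsto2\pi-\theta$ interchanges the two summands, so it suffices to evaluate twice the first. Substituting $\sin\theta=(g-p)/A$ with $g=p+A\sin\theta$, carrying out the polynomial division and a partial-fraction decomposition of $\frac{N(g)}{1-g^2}$, and applying the standard integral $\int_0^{2\pi}\frac{d\theta}{a+b\sin\theta}=\frac{2\pi}{\sqrt{a^2-b^2}}$, the computation closes once one verifies $\sqrt{(1\mp p)^2-A^2}=\frac{q^2\mp p}{q}$ (where $p^2<q^4$ again ensures $q^2\mp p>0$). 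The constant part of the division contributes $-2\pi q$, while the two partial-fraction pieces, carrying coefficients $(1\mp p)(q^2\mp p)$, contribute $+2\pi$ in total, giving enclosed area $2\pi-2\pi q$. I expect the bookkeeping here — pinning down the partial-fraction coefficients and fixing the orientation sign — to be the delicate step; everything preceding it is formal setup.
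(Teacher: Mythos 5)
Your proof is correct, and it reaches the paper's answer by a route that differs precisely in the places where the paper's own argument grinds through computation. The skeleton is shared: the primitive $\left(\frac{\sqrt{p^2+4r^2}}{2}-\frac{|p|}{2}\right)d\phi$ of $\omega^p$, pullback to the parameter $\theta$, the reflection $\theta\mapsto 2\pi-\theta$, and the final evaluation via $\int_0^{2\pi}\frac{d\theta}{a+b\sin\theta}=\frac{2\pi}{\sqrt{a^2-b^2}}$ together with $\sqrt{(1\mp p)^2-A^2}=\frac{q^2\mp p}{q}$, where $A=\frac{\sqrt{q^2-p^2}\sqrt{1-q^2}}{q}$ in your notation (the paper writes this step as $\int_0^{2\pi}\frac{d\theta}{1\pm p+A\sin\theta}=\frac{2\pi q}{q^2\pm p}$). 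You differ in three places. First, you derive the key identity $\sqrt{p^2+4r^2}=1-A\sin\theta$ structurally, from $\sqrt{p^2+4r^2}=|z_1|^2+|z_2|^2$ on $H^{-1}(\{-p\})$ and $|z_1|^2+|z_2|^2=1-\frac{v_1+w_1}{2}$, whereas the paper proves the same identity by an explicit norm computation on the formula for $\Gamma$ (the appendix lemma). Second, you compute $\frac{d\phi}{d\theta}$ upstairs as $\frac{d}{d\theta}\arg z_1+\frac{d}{d\theta}\arg z_2$, with denominators $1-(p\pm A\sin\theta)^2$, rather than differentiating $\tan\phi=y/x$ for the product curve downstairs, with denominators $(1\pm A\sin\theta)^2-p^2$; both are correct, and yours is lighter. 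Third, and most substantively: where the paper disposes of $\int_\Gamma\frac{|p|}{2}\,d\phi=0$ by asserting one can ``follow the same process,'' you prove $\oint_\Gamma d\phi=0$ by a winding-number argument --- the curves $(v_2,v_3)$ and $(w_2,w_3)$ are ellipses centered at $\left(0,\sqrt{q^2-p^2}\right)$ with vertical semiaxis $\frac{|p|\sqrt{1-q^2}}{q}$, and $p^2<q^4$ is exactly the condition that they miss the origin. This is shorter and makes visible the geometric role of the hypothesis $p^2<q^4$, which in the paper appears only implicitly through the positivity of $q^2\mp p$. Your bookkeeping checks out: the partial-fraction coefficients are $\frac{(1\mp p)(q^2\mp p)}{2}$ (you dropped the factor $\frac{1}{2}$, harmlessly, since your stated totals are right), and with the parametrization as given --- which traverses $\Gamma$ clockwise --- the constant term contributes $+2\pi q$ and the partial fractions $-2\pi$, i.e.\ signed area $2\pi q-2\pi$; the signs you quote are those of the reversed orientation. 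This is the same flip the paper absorbs in its final sentence by reparametrizing $\theta\mapsto 2\pi-\theta$, and you flagged it explicitly, so nothing is missing.
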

\begin{proof}
    See Appendix.
\end{proof}

\begin{proposition}\label{use}
    The Lagrangian torus $L_1(p, q)$ is Hamiltonian isotopic to the toric fiber
    \begin{equation*}
        T(\xi, \zeta)=\begin{cases}
            T\left(q-\frac{1}{2}, q-p-\frac{1}{2}\right) & \text{for}\: 0<p<q^2\\
            T\left(p+q-\frac{1}{2}, q-\frac{1}{2}\right) & \text{for}\: -q^2<p<0
        \end{cases}
    \end{equation*}
\end{proposition}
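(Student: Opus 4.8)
The plan is to combine the three ingredients already assembled: Lemma \ref{lemma1}, which identifies $\left(\psi_{-1}\times\psi_{-1}\right)\left(F^{-1}(S^1(r))\cap H^{-1}(\{-p\})\right)$ with the explicit toric fiber $T\left(\frac{1+p-\sqrt{p^2+4r^2}}{2}, \frac{1-p-\sqrt{p^2+4r^2}}{2}\right)$; Proposition \ref{prop1}, which says that $L_1(p,q)$ is Hamiltonian isotopic to this fiber precisely when $r$ is chosen so that $S^1(r)$ and $\Gamma$ bound the same $\omega^p$-area; and the preceding proposition, which computes that area to be $2\pi-2\pi q$. The remaining task is to solve the resulting area-matching equation for $\sqrt{p^2+4r^2}$ and substitute.

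First I would compute the $\omega^p$-area enclosed by the circle $S^1(r)$. Writing $\omega^p=\frac{2\rho}{\sqrt{p^2+4\rho^2}}\,d\rho\wedge d\phi$ in polar coordinates, the radial integral $\int_0^r \frac{2\rho}{\sqrt{p^2+4\rho^2}}\,d\rho$ evaluates, via the substitution $u=p^2+4\rho^2$, to $\frac{1}{2}\sqrt{p^2+4r^2}-\frac{1}{2}|p|$, so the enclosed area equals $\pi\sqrt{p^2+4r^2}-\pi|p|$. Setting this equal to the area $2\pi-2\pi q$ bounded by $\Gamma$ gives
\begin{equation*}
\sqrt{p^2+4r^2}=2-2q+|p|.
\end{equation*}

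Finally I would substitute this value of $\sqrt{p^2+4r^2}$ into the two coordinates of the toric fiber from Lemma \ref{lemma1}, obtaining $\xi=\frac{-1+p+2q-|p|}{2}$ and $\zeta=\frac{-1-p+2q-|p|}{2}$. The computation is routine and presents no serious obstacle; the only point demanding care is the lower endpoint of the radial integral, where $\sqrt{p^2}=|p|$ rather than $p$, and it is precisely this absolute value, rather than any geometric subtlety, that forces the case split. For $0<p<q^2$ we have $|p|=p$, giving $\xi=q-\frac{1}{2}$ and $\zeta=q-p-\frac{1}{2}$, while for $-q^2<p<0$ we have $|p|=-p$, giving $\xi=p+q-\frac{1}{2}$ and $\zeta=q-\frac{1}{2}$, matching the two branches of the statement. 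I would also verify that each resulting point $(\xi,\zeta)$ lies in the interior of $P_1$ under the standing hypotheses $0<|p|<q^2$ and $q<1$, so that Lemma \ref{lemma1} legitimately applies and the fiber is a genuine Lagrangian torus.
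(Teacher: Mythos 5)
Your proposal is correct and follows essentially the same route as the paper: both combine Proposition \ref{prop1}, Lemma \ref{lemma1}, and the computed area $2\pi-2\pi q$ to obtain $\sqrt{p^2+4r^2}=2-2q+|p|$, with the case split coming from the same absolute value when substituting into the coordinates of Lemma \ref{lemma1}. The only cosmetic difference is that the paper matches signed areas after noting that $\Gamma$ is traversed clockwise (so both it and $S^1(r)$ carry negative signed area), whereas you match unsigned areas of the enclosed regions; the two conventions produce the identical equation for $r$.
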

\begin{proof}

    The explicit expression of the curve $\Gamma$ is given in Appendix. Then we note that $\Gamma$ rotates clockwise as $\theta$ changes from $0$ to $2\pi$. Thus we choose $S^1(r)$ to rotate clockwise. Then the $\omega^p$-area of $S^1(r)$ is $\pi|p|-\pi\sqrt{p^2+4r^2}$. Since $\Gamma$ and $S^1(r)$ have the same $\omega^p$-area, we have 
    \begin{equation*}
        r^2=(q-1)^2-|p|(q-1)
    \end{equation*}
    According to Lemma \ref{lemma1} $L_1(p, q)$ is Hamiltonian isotopic to the fiber over the point $(\xi, \zeta)$ where $\xi=\frac{1+p-\sqrt{p^2+4r^2}}{2}$ and $\zeta=\frac{1-p-\sqrt{p^2+4r^2}}{2}$. Note that $0<q<1$. We have 
    \begin{equation*}
        \xi=\begin{cases}
            q-\frac{1}{2} & \text{for}\: 0<p<q^2\\
            p+q-\frac{1}{2} & \text{for}\: -q^2<p<0
        \end{cases}\quad \text{and}\quad 
        \zeta=\begin{cases}
            q-p-\frac{1}{2} & \text{for}\: 0<p<q^2\\
            q-\frac{1}{2} & \text{for}\: -q^2<p<0
        \end{cases}
    \end{equation*}
    See Figure \ref{figure1} and Figure \ref{figure2}.

    \begin{figure}[h]
        \centering
        \captionsetup{justification=centering}
        \begin{subfigure}{0.4\textwidth}
        \centering
            \includegraphics[width=\textwidth]{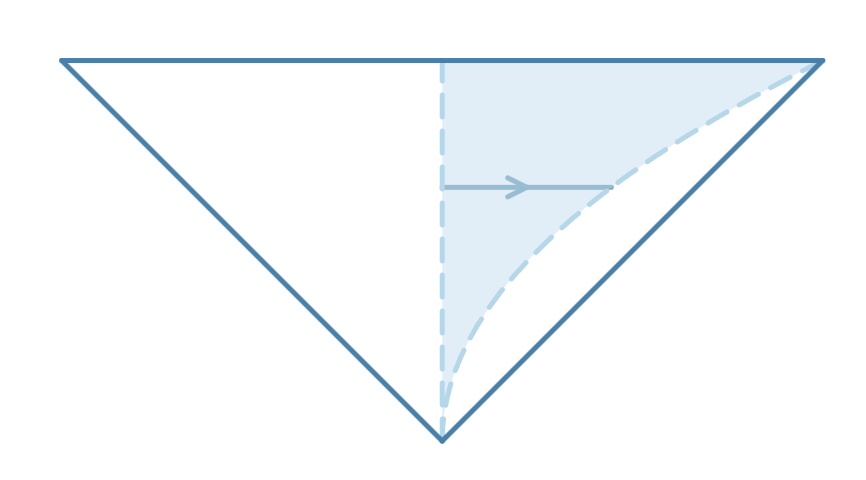}
        \end{subfigure}
        \begin{subfigure}{0.4\textwidth}
        \centering
            \includegraphics[scale=0.52]{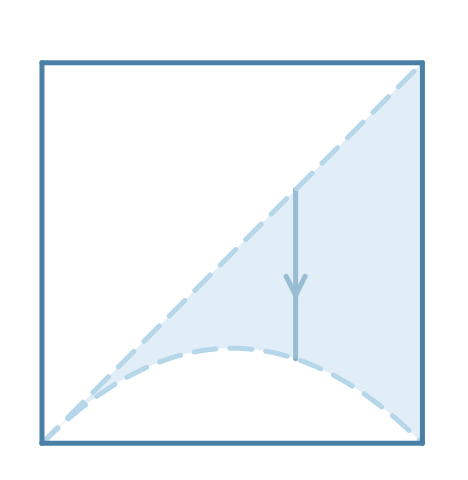}
        \end{subfigure}
        \caption{The case where $0<p<q^2$\\ Since $\xi=q-\frac{1}{2}$ and $\zeta=q-p-\frac{1}{2}$ then $-\xi^2-\frac{1}{4}<\zeta<\xi$}
        \label{figure1}
    \end{figure}

     \begin{figure}[h]
        \centering
        \captionsetup{justification=centering}
        \begin{subfigure}{0.4\textwidth}
        \centering
            \includegraphics[width=\textwidth]{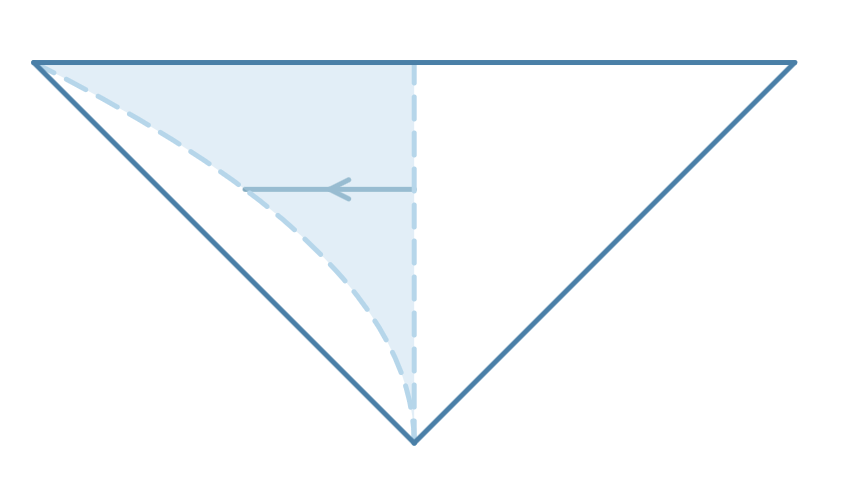}
        \end{subfigure}
        \begin{subfigure}{0.4\textwidth}
        \centering
            \includegraphics[scale=0.52]{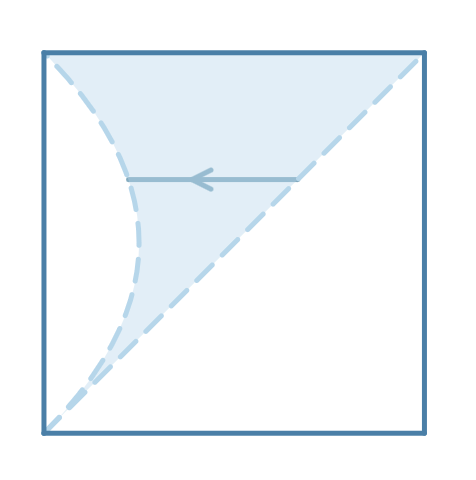}
        \end{subfigure}
        \caption{The case where $-q^2<p<0$\\ Since $\xi=p+q-\frac{1}{2}$ and $\zeta=q-\frac{1}{2}$ then $-\zeta^2-\frac{1}{4}<\xi<\zeta$}
        \label{figure2}
    \end{figure}

\end{proof}
\end{subsection}
\begin{subsection}{Case 2: $p^2\ge q^4$ }

We will use symmetric probes introduced by M. Abreu, M. Borman and D. McDuff in\cite{ABM}, generalizing the definition of probes introduced by D. McDuff in \cite{M}.

\begin{definition}
    A \textbf{probe} $P$ in a rational polytope $\Delta\subset\mathbb{R}^n$ is a directed rational line segment contained in $\Delta$ whose initial point $b_P$ lies in the interior of a facet $F_P$ of $\Delta$ and whose direction vector $v_p\in\mathbb{Z}^n$ is primitive and integrally transverse to the facet $F_P$.A probe $P$ is \textbf{symmetric} if the endpoint $e_P$ lies on the interior of a facet $F_P^\prime$ that is integrally transverse to $v_P$.
\end{definition}

In \cite{B}, J. Brendel proved that for two points $x$ and $x^\prime$ in the symmetric probe, equidistant from the boundary of the probe, the toric fibers over these two points are Hamiltonian isotopic. His result was proven for the toric manifolds. In our case we will take a probe not passing through the singularity, then the result will also hold. We give the proof for the special case used to prove our result.

\begin{proposition}
    Given the symmetric probe $\sigma=\{p=a\}$, $-1<a<1$ and $a\ne 0$, in the polytope $P_2$, let $(a, q_1), (a, q_2)\in\sigma$ be at equal distance to the boundary of the symmetric probe, then $L_2(a, q_1)$ and $L_2(a, q_2)$ are Hamiltonian isotopic.
\end{proposition}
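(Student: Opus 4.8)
The plan is to prove the proposition by symplectically reducing $S^2\times S^2$ along the circle action whose orbits assemble the tori $L_2$, thereby turning the statement into an elementary comparison of areas on a sphere; this is the concrete incarnation of Brendel's symmetric probe argument in our four-dimensional setting, and it has the decisive advantage of being intrinsic to $S^2\times S^2$, hence unaffected by the feature of Case 2 that a component of $\Gamma_2$ meets $-e_1$ (which is precisely what obstructed the chart $\psi_{-1}$ in Case 1). First I would observe that $(R_t,R_{-t})$ is a Hamiltonian circle action with moment map $\frac12(v_1-w_1)$, and that on every probe fiber $L_2(a,q)$ one has $v_1-w_1=2a$ (this is already visible on $\Gamma_2$ and is preserved by the action), so all these tori lie in the single level set $\Sigma_a=\{\tfrac12(v_1-w_1)=a\}$. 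For $-1<a<1$ and $a\ne 0$ the value $a$ is regular and the action is free on $\Sigma_a$, so the reduced space $\mathcal R_a=\Sigma_a/S^1$ is a closed symplectic surface.

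Next I would identify $\mathcal R_a$ together with its residual structure. The second circle of the conjugated toric action, generated by $\tilde G=\sqrt{(v\cdot D_2 w+1)/2}$, has $\tilde G\equiv q$ on $L_2(a,q)$, commutes with $(R_t,R_{-t})$, and is smooth on $\Sigma_a$ because its singular locus $\{v\cdot D_2 w=-1\}$ forces $v_1=w_1$ and so is disjoint from $\Sigma_a$ when $a\ne 0$; this is exactly where the hypothesis $a\ne 0$ (the probe avoiding the exceptional sphere) enters. Thus $\tilde G$ descends to a Hamiltonian rotation of $\mathcal R_a$ with moment map $q$ whose image is the probe segment $\sigma$, i.e.\ the interval $[\,|a|,1\,]$, exhibiting $\mathcal R_a$ as the toric sphere over $\sigma$, with the two endpoint facets of the symmetric probe collapsing to the two poles. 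Each torus $L_2(a,q)$ is $(R_t,R_{-t})$-invariant and is cut out inside $\Sigma_a$ by $\tilde G=q$, so it descends to the latitude circle $C(a,q)=\{\bar{\tilde G}=q\}\subset\mathcal R_a$, which separates $\mathcal R_a$ into pieces of areas proportional to $q-|a|$ and $1-q$ by the Archimedes relation for a toric sphere. Hence $C(a,q_1)$ and $C(a,q_2)$ bound the same pair of areas exactly when $q_1-|a|=1-q_2$, that is, exactly when $(a,q_1)$ and $(a,q_2)$ are equidistant from the boundary of $\sigma$.

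The conclusion is then routine: two embedded circles dividing the sphere $\mathcal R_a$ into regions of equal area are Hamiltonian isotopic (Moser), so a Hamiltonian isotopy of $\mathcal R_a$ carries $C(a,q_1)$ to $C(a,q_2)$; pulling its generating function back through the reduction, extending to an $S^1$-invariant Hamiltonian near $\Sigma_a$ and cutting off, yields an $(R_t,R_{-t})$-invariant Hamiltonian isotopy of $S^2\times S^2$ taking $L_2(a,q_1)$ to $L_2(a,q_2)$. The step I expect to be the main obstacle is the identification of the reduced data in the previous paragraph, namely verifying that the integral transversality of both endpoint facets to the direction $(0,1)$ forces the residual action on $\mathcal R_a$ to be the standard rotation of a sphere with moment image exactly $\sigma$ (facet preimages going to the poles), together with the affineness of the enclosed area in $q$ that makes ``equidistant from the boundary'' literally ``equal enclosed area''. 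Checking that the reduced isotopy lifts is also a point to treat with care, but it is standard for a free circle reduction.
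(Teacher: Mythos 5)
Your proposal is correct and follows essentially the same route as the paper: the paper likewise reduces the preimage of the probe, the level set $\{v_1+w_1=2a\}$, by the circle action $(R_t,R_t)$ --- the $(D_1,D_2)$-conjugate of your $(R_t,R_{-t})$ acting on $\{v_1-w_1=2a\}$ --- verifies freeness of the action off $a\in\{-1,0,1\}$, identifies the quotient as a toric sphere with moment image $\sigma\cap P_2$ so that equidistant points give latitude circles bounding disks of equal area, and then lifts the resulting Hamiltonian isotopy of circles and extends it by a cutoff function. The only cosmetic difference is that you work in the $L_2$ (anti-diagonal) picture while the paper works with $L_1$ and the diagonal action, which are interchangeable via the Hamiltonian diffeomorphism $(D_1,D_2)$ already established in Proposition \ref{prop0}.
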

\begin{proof}
    In \cite{OU}, J. Oakley and M. Usher gave the moment map of $\hat{F}(0)$ as $(F+G, 1-F)$ where $F(v, w)=\frac{1}{2}|v+w|$ and $G(v, w)=\frac{1}{2}(v_1+w_1)$ with the moment polytope 
    \begin{equation*}
        \{(x, y)\in\mathbb{R}^2\mid 0\le x\le 2-2y, y\ge 0\}
    \end{equation*} Recall we changed the coordinates by $p=x+y-1$ and $q=1-y$. Then the preimage $Z$ of $\sigma\cap P_2$ under the moment map is
    \begin{equation*}
        Z=\{(v, w)\in S^2\times S^2\mid v_1+w_1=2a\}
    \end{equation*}

    We will apply toric reduction to $Z$ as in \cite[Theorem 2.4]{B}. Recall the moment polytope is in $\mathfrak{t}^*\cong\mathbb{R}^2$. We denote the two generators of $\mathfrak{t}^*$ by $e_1^*$ and $e_2^*$. Let $K=\exp(e_1)\times\{1\}$. Then $K$ acts freely on $Z$. In fact, given $\theta\in K$
    \begin{equation*}
        \theta\cdot\left((v_1, v_2, v_3), (2a-v_1, w_2, w_3)\right)=\left(\begin{bmatrix}
            v_1\\
            v_2\cos(\theta)-v_3\sin(\theta)\\
            v_2\sin(\theta)+v_3\cos(\theta)
        \end{bmatrix}, \begin{bmatrix}
            2a-v_1\\
            w_2\cos(\theta)-w_3\sin(\theta)\\
            w_2\sin(\theta)+w_3\cos(\theta)
        \end{bmatrix}\right)
    \end{equation*}
    Then $\theta\cdot\left((v_1, v_2, v_3), (2a-v_1, w_2, w_3)\right)=\left((v_1, v_2, v_3), (2a-v_1, w_2, w_3)\right)$ implies that $\theta=0$ or $v_2=v_3=w_2=w_3=0$. In the later case $v_1=\pm 1$ and $2a-v_1=\pm 1$. Then $a=-1, 0, \text{or}\: 1$ contradicting with the assumption of $a$. 

    Thus $Z/K$ is a toric manifold with toric action given by $\{1\}\times\exp(e_2)$ with moment polytope $\sigma\cap P_2$. Then $Z/K$ is a sphere and the preimages of $(a, q_1)$ and $(a, q_2)$ under the moment map are two circles, denoted by $S_{q_1}$ and $S_{q_2}$ respectively. Since the two points $(a, q_1)$ and $(a, q_2)$ are at equal distance to the boundary of the symmetric probe, then $S_{q_1}$ and $S_{q_2}$ bound the disks with the same area on $Z/K$. Thus $S_{q_1}$ and $S_{q_2}$ are Hamiltonian isotopic. Then we lift the corresponding Hamiltonian to $Z$ and extend it to the whole manifold $S^2\times S^2$ by cutoff function, see \cite{AM}, \cite{B2}.
\end{proof}

Now  $L_1(p, q)$ is Hamiltonian isotopic to $L_1(p, 1-q+|p|)$. By Proposition \ref{use}, $L_1(p, 1-q+|p|)$ is Hamiltonian isotopic to 
$\begin{cases}
        T\left(\frac{1}{2}-q+p, \frac{1}{2}-q\right) & \text{for}\: q^2\le p<q\\
        T\left(\frac{1}{2}-q, \frac{1}{2}-q-p\right) & \text{for}\: -q<p\le -q^2
\end{cases}$.
As in \cite{B}, in the standard toric structure of $S^2\times S^2$, the toric fiber 
$\begin{cases}
        T\left(\frac{1}{2}-q+p, \frac{1}{2}-q\right) & \text{for}\: q^2\le p<q\\
        T\left(\frac{1}{2}-q, \frac{1}{2}-q-p\right) & \text{for}\: -q<p\le -q^2
\end{cases}$
is Hamiltonian isotopic to 
$\begin{cases}
        T\left(q-\frac{1}{2}, q-p-\frac{1}{2}\right) & \text{for}\: q^2\le p<q\\
        T\left(q+p-\frac{1}{2}, q-\frac{1}{2}\right) & \text{for}\: -q<p\le -q^2
\end{cases}$. See Figure \ref{figure3} and Figure \ref{figure4}. Thus we have proven the following proposition.

    \begin{figure}[h]
        \centering
        \captionsetup{justification=centering}
        \begin{subfigure}{0.3\textwidth}
        \centering
            \includegraphics[width=\textwidth]{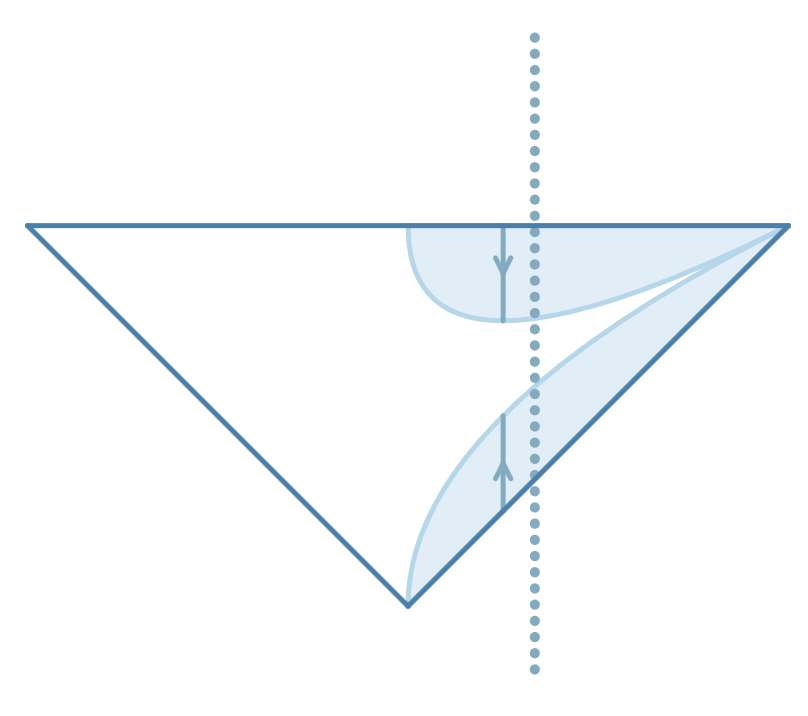}
        \end{subfigure}
        \begin{subfigure}{0.3\textwidth}
        \centering
            \includegraphics[width=\textwidth]{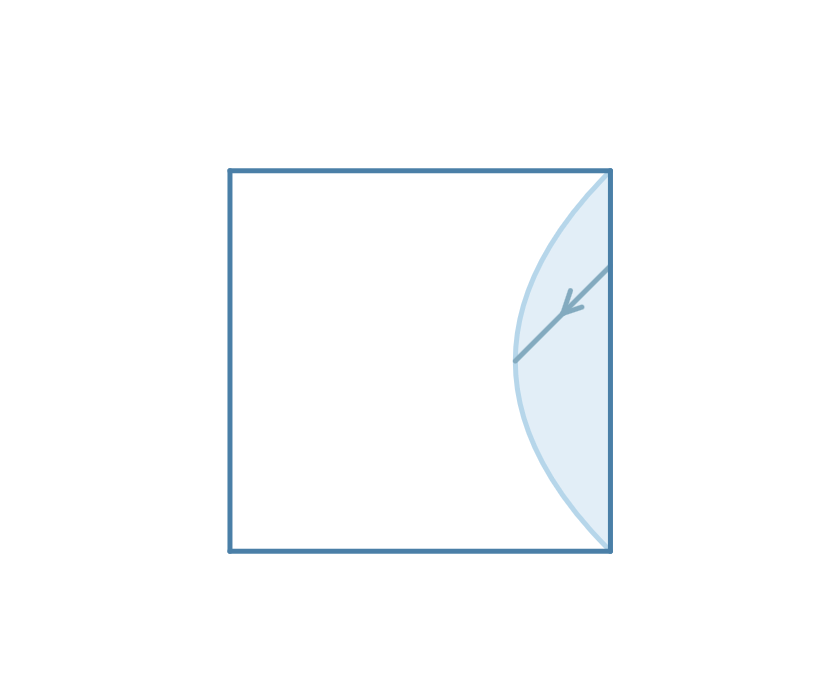}
        \end{subfigure}
        \begin{subfigure}{0.3\textwidth}
        \centering
            \includegraphics[width=\textwidth]{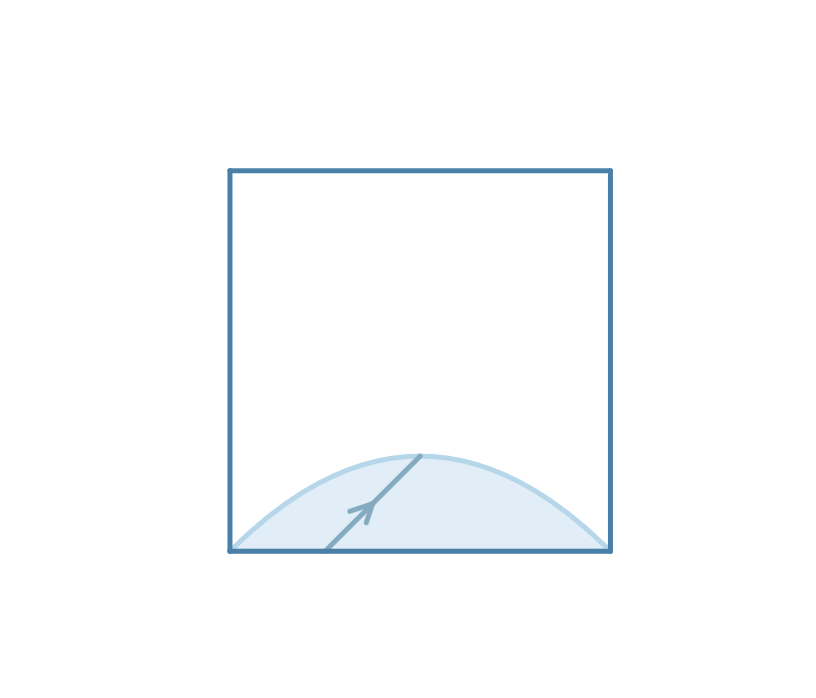}
        \end{subfigure}
        \caption{The case where $q^2\le p<q$\\ Since $\xi=q-\frac{1}{2}$ and $\zeta=q-p-\frac{1}{2}$ then $\xi-\zeta=p$}
        \label{figure3}
    \end{figure}

\begin{figure}[h]
        \centering
        \captionsetup{justification=centering}
        \begin{subfigure}{0.3\textwidth}
        \centering
            \includegraphics[width=\textwidth]{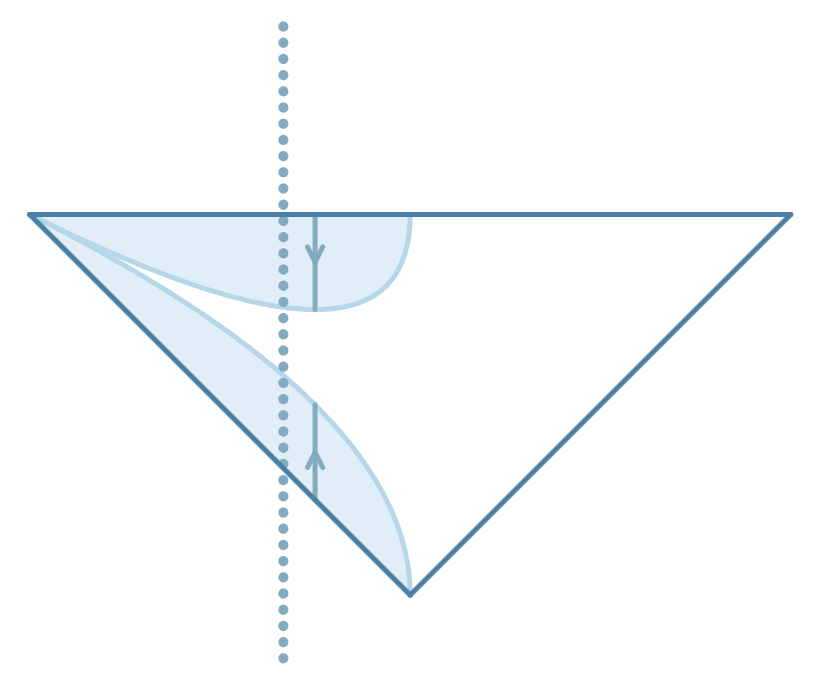}
        \end{subfigure}
        \begin{subfigure}{0.3\textwidth}
        \centering
            \includegraphics[width=\textwidth]{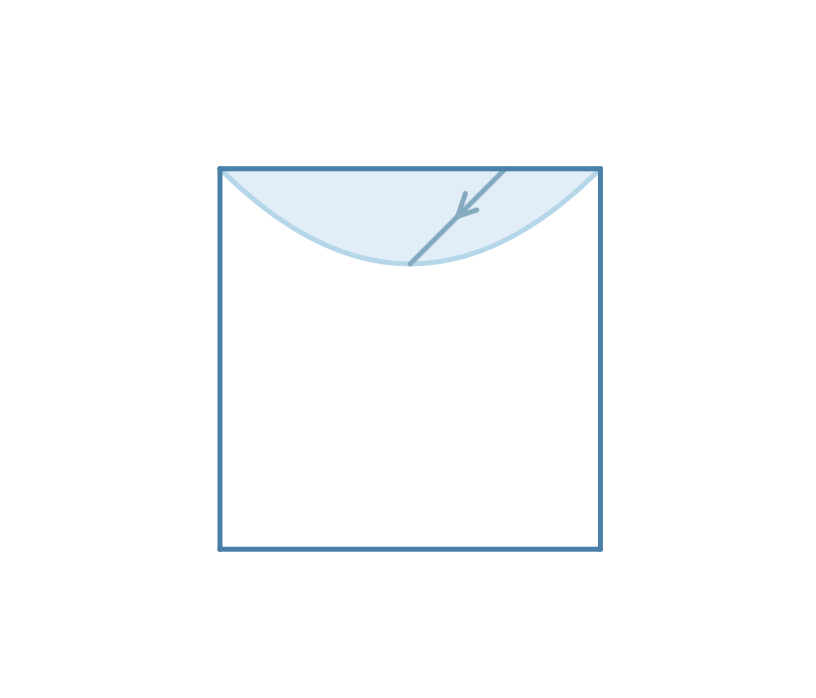}
        \end{subfigure}
        \begin{subfigure}{0.3\textwidth}
        \centering
            \includegraphics[width=\textwidth]{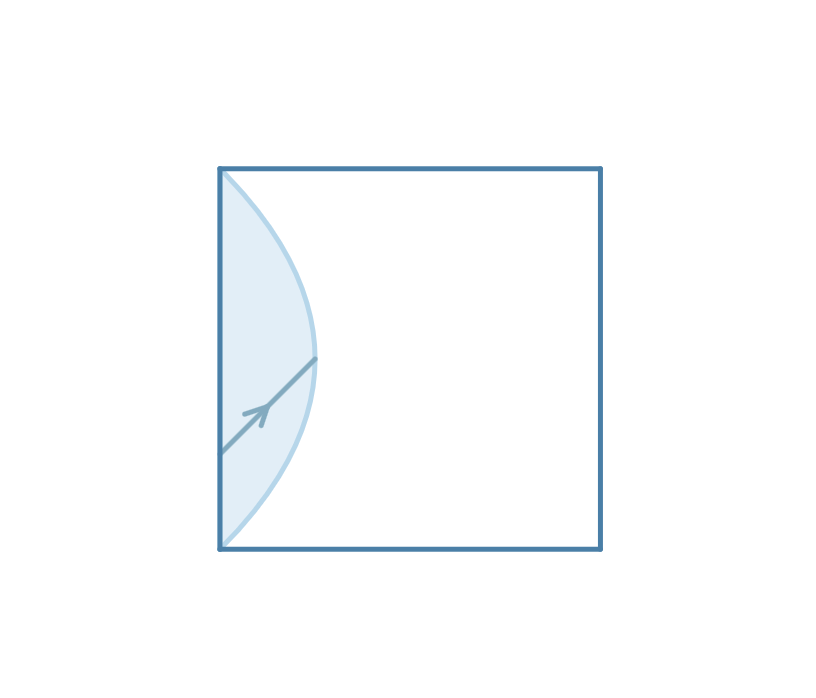}
        \end{subfigure}
        \caption{The case where $-q<p\le -q^2$\\ Since $\xi=q+p-\frac{1}{2}$ and $\zeta=q-\frac{1}{2}$ then $\xi-\zeta=p$}
        \label{figure4}
    \end{figure}

    \begin{proposition}
    The Lagrangian torus $L_1(p, q)$ is Hamiltonian isotopic to the toric fiber 
    \begin{align*}
        T(\xi, \zeta)=\begin{cases}
            T\left(q-\frac{1}{2}, q-p-\frac{1}{2}\right) & \text{for}\: q^2\le p<q\\
            T\left(q+p-\frac{1}{2}, q-\frac{1}{2}\right) & \text{for}\: -q<p\le -q^2
        \end{cases}
    \end{align*}
\end{proposition}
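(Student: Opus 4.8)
The plan is to reduce Case 2 to the already-settled Case 1 by one symmetric-probe reflection inside $P_2$, and then to correct the resulting toric fiber by one more symmetric-probe reflection inside the standard polytope $P_1$. Throughout I treat $p>0$ (so $q^2\le p<q$); the argument for $-q<p\le -q^2$ is the mirror image.

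Since $p^2\ge q^4$ and $q>0$ force $p\neq 0$, I set $a=p$ and invoke the symmetric probe $\sigma=\{p=a\}$ of the symmetric-probe proposition established above; its endpoints lie on the slanted facet $\{|p|=q\}$ and on the top facet $\{q=1\}$, so its midpoint sits at height $q=\tfrac{1+|p|}{2}$. By Proposition \ref{prop0} we have $L_1(p,q)\simeq L_2(p,q)$, so that proposition applies: the point $(p,q)$ and its reflection $(p,\,1-q+|p|)$ across the midpoint of $\sigma$ are equidistant from $\partial\sigma$, and transporting back by Proposition \ref{prop0} again gives
\begin{equation*}
    L_1(p,q)\;\simeq\;L_1\bigl(p,\,1-q+|p|\bigr).
\end{equation*}

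Next I would check that the reflected point $(p,q')$, where $q'=1-q+|p|$, lands strictly inside the Case 1 region, i.e.\ $0<|p|<(q')^2$, so that Proposition \ref{use} applies to it. This is the one genuine computation: writing $g(t)=(1-q+t)^2-t$, the quadratic $g$ attains its minimum at $t=q-\tfrac12$, and $q-\tfrac12<q^2$ because $q^2-q+\tfrac12>0$; hence $g$ is increasing on the range $q^2\le |p|<q$, and $g(q^2)=(1-q+q^2)^2-q^2=(1-q)^2(1+q^2)>0$, so $|p|<(q')^2$ throughout. Since one also has $q'<1$ and $|p|<q'$, the point $(p,q')$ is an interior point of $P_2$, and feeding $q'$ into Proposition \ref{use} yields
\begin{equation*}
    L_1(p,q')\;\simeq\;T\!\left(q'-\tfrac12,\,q'-p-\tfrac12\right)=T\!\left(\tfrac12-q+p,\ \tfrac12-q\right),
\end{equation*}
with the mirror computation producing $T\!\left(\tfrac12-q,\ \tfrac12-q-p\right)$ when $-q<p\le-q^2$.

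Finally I would transport these fibers to the ones in the statement by a reflection inside $P_1$. Both $T(\tfrac12-q+p,\tfrac12-q)$ and the target $T(q-\tfrac12,q-p-\tfrac12)$ satisfy $\xi-\zeta=p$, so they lie on the diagonal segment $\{\xi-\zeta=p\}$, a symmetric probe in $P_1$ with primitive direction $(1,1)$ meeting the facets $\{\zeta=-\tfrac12\}$ and $\{\xi=\tfrac12\}$, each integrally transverse to $(1,1)$. The two fibers are exact reflections of one another across the midpoint $(\tfrac{p}{2},-\tfrac{p}{2})$ of this segment, hence equidistant from $\partial P_1$; because the standard $S^2\times S^2$ is a \emph{smooth} toric manifold, Brendel's theorem \cite{B} applies directly and the fibers are Hamiltonian isotopic. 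Chaining the three Hamiltonian isotopies gives the claim. The main obstacle is the middle step—pinning down the reflected height $q'=1-q+|p|$ and confirming via the factorization $g(q^2)=(1-q)^2(1+q^2)$ that it falls strictly in Case 1—together with checking that the two segments used are genuine symmetric probes: in $P_2$ one must keep $a\neq0$ to avoid the resolved singularity at $(0,0)$, whereas in $P_1$ smoothness makes Brendel's result immediate.
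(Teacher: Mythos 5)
Your proposal is correct and takes essentially the same approach as the paper: reflect $(p,q)$ across the vertical symmetric probe $\{p=a\}$ in $P_2$ to land in the Case 1 region, apply Proposition \ref{use} there, and then carry the resulting fiber $T\left(\tfrac12-q+p,\tfrac12-q\right)$ (resp.\ its mirror) to the stated one via the diagonal symmetric probe $\{\xi-\zeta=p\}$ in $P_1$ using Brendel's theorem. Your explicit verification that the reflected point $(p,\,1-q+|p|)$ actually lies in the Case 1 region (via the factorization $(1-q+q^2)^2-q^2=(1-q)^2(1+q^2)$) is a check the paper leaves implicit, but otherwise the two arguments coincide.
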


\end{subsection}

Finally, when we change the coordinates $(p, q)$ back to $(x, y)$, we get Theorem \ref{A}.
\end{section}

\section{Proof of Theorem \ref{B}}

In Theorem \ref{B}, $x+y=1$, which means $p=0$. In \cite{FOOO}, they have proven that $L_1(0, q)$ with $0<q\le\frac{1}{2}$ are not Hamiltonian isotopic to toric fibers for the standard toric structure. Thus we focus on $L_1(0, q)$ with $\frac{1}{2}<q<1$.  Based on Theorem \ref{A}, the moment polytope $P_1$ for the standard toric structure has been filled in by $L_1(p, q)$ with $p\ne 0$ except the diagonal. Thus if $L_1(0, q)$ is Hamiltonian isotopic to a fiber $T(\xi, \zeta)$ in the standard toric structure, it must be able to be Hamiltonian isotopic to some $T(\xi, \xi)$. Otherwise, assume $L_1(0, q)$ is Hamiltonian isotopic to some $T(\xi, \zeta)$ with $|\xi|\ne|\zeta|$. Denote the Hamiltonian isotopy by $\phi_S^t$. By Weinstein's Lagrangian neighborhood theorem, there is a symplectomorphism from a neighborhood of $T(\xi, \zeta)$ to a neighborhood of the zero section of $T^*T(\xi, \zeta)$ which takes a Lagrangian torus $C^1$-close to $T(\xi, \zeta)$ to the image of a closed $1$-form in $T^*T(\xi, \zeta)$. Denote the $1$-form corresponding to $\phi_S^1(L_1(\epsilon, q))$ by $\lambda_1$ for sufficiently small $\epsilon$ and the $1$-forms corresponding to toric fibers $T(\xi^\prime, \zeta^\prime)$ by $\lambda_{\xi^\prime-\xi, \zeta^\prime-\zeta}$ for $(\xi^\prime,\zeta^\prime)$ close enough to $(\xi, \zeta)$. Then there is toric fiber $T(\xi^\prime, \zeta^\prime)$ such that $[\lambda_{\xi^\prime-\xi, \zeta^\prime-\zeta}]=[\lambda_1]\in H^1(T(\xi, \zeta), \mathbb{R})$.
Thus there is a smooth function $h: T(\xi, \zeta)\to\mathbb{R}$ such that $\lambda_{\xi^\prime-\xi, \zeta^\prime-\zeta}-\lambda_1=dh$. Then $\Pi\circ h$ generates a Hamiltonian isotopy between $\phi_S^1(L_1(\epsilon, q))$ and $T(\xi^\prime, \zeta^\prime)$ where $\Pi: T^*T(\xi, \zeta)\to T(\xi, \zeta)$ is the projection.
On the other hand, $L_1(\epsilon, q)$ is Hamiltonian isotopic to $T\left(q-\frac{1}{2}, q-\epsilon-\frac{1}{2}\right)$ for positive $\epsilon$ by Theorem \ref{A}. If $\epsilon$ is small enough, $T(\xi^\prime, \zeta^\prime)$ is not Hamiltonian isotopic to $T\left(q-\frac{1}{2}, q-\epsilon-\frac{1}{2}\right)$ by \cite{B}. We get a contradiction.

Then we will compute the displacement energy germ introduced in \cite{CS} to show that it is not possible that $L_1(0, q)$ is Hamiltonian isotopic to some $T(\xi, \xi)$.

\begin{definition}[\cite{CS}]
    Let $(M,\omega)$ be a symplectic manifold and $L$ be a closed embedded Lagrangian submanifold. The \textbf{displacement energy germ} is a function germ $S_L^e: H^{1}(L, \mathbb{R})\to [0, \infty]$ at the point $0\in H^{1}(L, \mathbb{R})$ defined as $S_L^e(\delta)=e(L_\delta)$ where $L_{\delta}$ is the image of a closed $1$-form on $L$ representing a sufficently small class $\delta\in H^1(L, \mathbb{R})$ and $e(L_\delta)$ is the displacement energy of $L_\delta$.
\end{definition}

\begin{proposition}[\cite{CS}]\label{germ}
    For each symplectomorphism $\psi$ we have $S_{\psi(L)}^e=S_L^e\circ(\psi|_L)^*$.
\end{proposition}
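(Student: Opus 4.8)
The plan is to deduce the proposition from two facts: that displacement energy is a symplectic invariant, and that the assignment $\delta\mapsto L_\delta$ transforms under a symplectomorphism $\psi$ exactly by the cohomology pullback $(\psi|_L)^*$. Both are formal once the cohomology class of a nearby Lagrangian is described intrinsically. First I would record that $e(\psi(N))=e(N)$ for any subset $N\subset M$ and any symplectomorphism $\psi$: if $\phi_H^t$ displaces $N$, then the conjugated flow $\psi\circ\phi_H^t\circ\psi^{-1}$ is the Hamiltonian flow of $H\circ\psi^{-1}$, it displaces $\psi(N)$, and $H\circ\psi^{-1}$ has the same Hofer length as $H$; minimizing over displacing Hamiltonians gives the equality. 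In particular $e(\psi(L_\delta))=e(L_\delta)$.

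The main step is the claim that, for $\delta\in H^1(L,\mathbb{R})$ small, $\psi(L_\delta)$ is $C^1$-close to $\psi(L)$ and represents the class $((\psi|_L)^{-1})^*\delta\in H^1(\psi(L),\mathbb{R})$. To see this I would describe the class of a Lagrangian $C^1$-close to a given one by symplectic area, independently of any Weinstein chart. Fixing a Weinstein identification of a neighborhood of $L$ with the zero section in $T^*L$ and writing $L_\delta$ as the graph of a closed $1$-form $\alpha$ with $[\alpha]=\delta$, a loop $\gamma\subset L$ and its pushoff $\gamma'\subset L_\delta$ are joined by the cylinder $C_\gamma(t,s)=(\gamma(t),s\,\alpha_{\gamma(t)})$; Stokes' theorem together with the tautological property of the Liouville form yields $\int_{C_\gamma}\omega=\int_\gamma\alpha=\langle\delta,[\gamma]\rangle$. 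Thus $\delta$ is recovered from the symplectic areas of thin cylinders connecting $L$ to $L_\delta$ inside the neighborhood, a description that makes no reference to the chart. Applying $\psi$ and using $\psi^*\omega=\omega$, the image cylinder $\psi(C_\gamma)$ connects $\psi\circ\gamma\subset\psi(L)$ to its pushoff in $\psi(L_\delta)$ with the same area, so $\langle[\psi(L_\delta)],\psi_*[\gamma]\rangle=\langle\delta,[\gamma]\rangle$ for every $\gamma$; equivalently $(\psi|_L)^*[\psi(L_\delta)]=\delta$, proving the claim.

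With the claim established the proposition follows. Given small $\delta'\in H^1(\psi(L),\mathbb{R})$, set $\delta=(\psi|_L)^*\delta'$, so that $\psi(L_\delta)$ represents $\delta'$ near $\psi(L)$. Then $\psi(L_\delta)$ and $(\psi(L))_{\delta'}$ are both $C^1$-close to $\psi(L)$ and represent $\delta'$, hence are graphs of cohomologous closed $1$-forms in a Weinstein neighborhood of $\psi(L)$ and therefore Hamiltonian isotopic, giving $e((\psi(L))_{\delta'})=e(\psi(L_\delta))$. Combining this with the invariance from the first step,
\[
S_{\psi(L)}^e(\delta')=e\big((\psi(L))_{\delta'}\big)=e\big(\psi(L_\delta)\big)=e(L_\delta)=S_L^e(\delta)=\big(S_L^e\circ(\psi|_L)^*\big)(\delta').
\]
As $\delta'$ ranges over a neighborhood of $0$, this is the asserted equality of germs. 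I expect the one genuine subtlety to be the intrinsic, chart-free area computation of the nearby-Lagrangian class, together with checking that for $\delta$ small $\psi$ carries the Weinstein neighborhood of $L$ into one of $\psi(L)$; everything else is bookkeeping.
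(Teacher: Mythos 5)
Your proof is correct, and in fact the paper offers no proof of this proposition at all --- it is imported verbatim from Chekanov--Schlenk \cite{CS} --- while your argument (symplectic invariance of displacement energy via conjugation of displacing flows, plus naturality of the nearby-Lagrangian cohomology class established chart-freely through symplectic areas of connecting cylinders) is precisely the standard argument underlying the cited result. The one subtlety you flag, chart-independence of the area description, is indeed handled by the exactness of $\omega$ on a Weinstein neighborhood together with the vanishing of $\omega$ on the Lagrangians themselves, exactly as you indicate.
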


Note that the displacement energy of $T(\xi, \zeta)$ with $(\xi, \zeta)\ne (0, 0)$ is $\min\left\{\frac{1}{2}-|\xi|, \frac{1}{2}-|\zeta|\right\}$ since the displacement energy of the circle $v_1=a(\ne 0)$ in $S^2$ is $\min\left\{\frac{1}{2}-\frac{|a|}{2}\right\}$. Also see \cite[Example 4.1]{B2}.

Now we compute the displacement energy germ of $L_1(0, q)$. Since $H^{1}(L_1(0, q), \mathbb{R})\cong H_1(L_1(0, q), \mathbb{R})$, then a neighborhood of $0\in H^{1}(L_1(0, q), \mathbb{R})$ can be identified with a neighborhood of the point $(0, q)$ in the moment polytope $P_2$. Let $(\delta_1, \delta_2)\in\mathbb{R}^2$ be close enough to $(0, 0)$ and $\delta_1\ne 0$. Then $S_{L_1(0, q)}^e(\delta_1, \delta_2)=e(L_1(\delta_1, q+\delta_2))$. 

First we consider the case where $\delta_1>0$. By Theorem \ref{A}, $L_1(\delta_1, q+\delta_2)$ is Hamiltonian isotopic to the toric fiber $
    T\left(q+\delta_2-\frac{1}{2}, q+\delta_2-\delta_1-\frac{1}{2}\right) $. Thus the displacement energy is 
    \begin{equation*}
        e(L_1(\delta_1, q+\delta_2))=\min\left\{\frac{1}{2}-\left|q+\delta_2-\frac{1}{2}\right|, \frac{1}{2}-\left|q+\delta_2-\delta_1-\frac{1}{2}\right|\right\}
    \end{equation*}
Since $q>\frac{1}{2}$, we can choose $\delta_1$ and $\delta_2$ small enough such that $q+\delta_2-\frac{1}{2}>0$ and $q+\delta_2-\delta_1-\frac{1}{2}>0$. Thus the displacement energy is
 \begin{align*}
        e(L_1(\delta_1, q+\delta_2)) &=\min\left\{\frac{1}{2}-\left(q+\delta_2-\frac{1}{2}\right), \frac{1}{2}-\left(q+\delta_2-\delta_1-\frac{1}{2}\right)\right\}\\
        &=\min\left\{1-q-\delta_2, 1-q-\delta_2+\delta_1\right\}\\
        &=1-q-\delta_2
    \end{align*}
    The last equality is from $\delta_1>0$.
    
Then we consider the case where $\delta_1<0$. By Theorem \ref{A}, $L_1(\delta_1, q+\delta_2)$ is Hamiltonian isotopic to the toric fiber $T\left(q+\delta_1+\delta_2-\frac{1}{2}, q+\delta_2-\frac{1}{2}\right)$. The displacement energy is
\begin{align*}
    e(L_1(\delta_1, q+\delta_2)) &=\min\left\{\frac{1}{2}-\left(q+\delta_1+\delta_2-\frac{1}{2}\right), \frac{1}{2}-\left(q+\delta_2-\frac{1}{2}\right)\right\}\\
    &=\min\left\{1-q-\delta_1-\delta_2, 1-q-\delta_2\right\}\\
    &=1-q-\delta_2
\end{align*}
The last equality is from $\delta_1<0$

Thus $S_{L_1(0, q)}^e(\delta_1, \delta_2)=1-q-\delta_2$ when $\delta_1\ne 0$

Next we compute the displacement energy germ of $T(\xi, \xi)$. Let $(\delta_1^\prime, \delta_2^\prime)\in\mathbb{R}^2$ be close enough to $(0, 0)$. If $\xi>0$, then $\xi+\delta_1^\prime>0$ and $\xi+\delta_2^\prime>0$ for small enough $\delta_1^\prime$ and $\delta_2^\prime$. Thus $S_{T(\xi, \xi)}^e(\delta_1^\prime, \delta_2^\prime)=e(T(\xi+\delta_1^\prime, \xi+\delta_2^\prime))=\min\left\{\frac{1}{2}-\xi-\delta_1^\prime, \frac{1}{2}-\xi-\delta_2^\prime\right\}$. If $\xi<0$, then $\xi+\delta_1^\prime<0$ and $\xi+\delta_2^\prime<0$ for small enough $\delta_1^\prime$ and $\delta_2^\prime$. Thus $S_{T(\xi, \xi)}^e(\delta_1^\prime, \delta_2^\prime)=e(T(\xi+\delta_1^\prime, \xi+\delta_2^\prime))=\min\left\{\frac{1}{2}+\xi+\delta_1^\prime, \frac{1}{2}+\xi+\delta_2^\prime\right\}$. 

The displacement energy germ of $T(\xi, \xi)$ is determined by two linearly independent functions but the displacement energy germ of $L_1(0, q)$ is determine by a single function when $\delta_1\ne 0$. Thus there is not a linear isomorphism on $\mathbb{R}^2$ taking $S_{L_1(0, q)}^e(\delta_1, \delta_2)$ to $S_{T(\xi, \xi)}^e(\delta_1^\prime, \delta_2^\prime)$. Thus $L(0, q)$ and $T(\xi, \xi)$ are not symplectomorphic, in particular, not Hamiltonian isotopic.

\section{Appendix}

\begin{lemma}\label{norm}
    Let $z$ be a point in the curve $\Gamma$. Then
    \begin{equation*}
        ||z||=\sqrt{\frac{1}{4}\left(1-\frac{\sqrt{q^2-p^2}\sqrt{1-q^2}}{q}\sin(\theta)\right)^2-\frac{1}{4}p^2}
    \end{equation*}
\end{lemma}
\begin{proof}
   First we write down the explicit expression of the curve $\Gamma$
\begin{align*}
    \Gamma =\left\{\frac{1-2q^2+p^2-\frac{(q^2-p^2)(1-q^2)}{q^2}\sin^2(\theta)+2i\sqrt{q^2-p^2}\sqrt{1-q^2}\cos(\theta)}{2\sqrt{\left(1+\frac{\sqrt{q^2-p^2}\sqrt{1-q^2}}{q}\sin(\theta)\right)^2-p^2}}\:\middle|\: 0\le\theta\le 2\pi\right\}
\end{align*}
    For a point $z=(x, y)$ in $\Gamma$,
    \begin{align*}
        x &=\frac{1-2q^2+p^2-\frac{(q^2-p^2)(1-q^2)}{q^2}\sin^2(\theta)}{2\sqrt{\left(1+\frac{\sqrt{q^2-p^2}\sqrt{1-q^2}}{q}\sin(\theta)\right)^2-p^2}}\\
        &=-\frac{1}{2}\sqrt{\left(1+\frac{\sqrt{q^2-p^2}\sqrt{1-q^2}}{q}\sin(\theta)\right)^2-p^2}+\frac{1-q^2+\frac{\sqrt{q^2-p^2}\sqrt{1-q^2}}{q}\sin(\theta)}{\sqrt{\left(1+\frac{\sqrt{q^2-p^2}\sqrt{1-q^2}}{q}\sin(\theta)\right)^2-p^2}}
    \end{align*}
    \begin{align*}
        x^2 &=\frac{1}{4}\left(\left(1+\frac{\sqrt{q^2-p^2}\sqrt{1-q^2}}{q}\sin(\theta)\right)^2-p^2\right)
        +\frac{\left(1-q^2+\frac{\sqrt{q^2-p^2}\sqrt{1-q^2}}{q}\sin(\theta)\right)^2}{\left(1+\frac{\sqrt{q^2-p^2}\sqrt{1-q^2}}{q}\sin(\theta)\right)^2-p^2}\\ &-1+q^2-\frac{\sqrt{q^2-p^2}\sqrt{1-q^2}}{q}\sin(\theta)
    \end{align*}
    and
    \begin{equation*}
        y=\frac{2\sqrt{q^2-p^2}\sqrt{1-q^2}\cos(\theta)}{2\sqrt{\left(1+\frac{\sqrt{q^2-p^2}\sqrt{1-q^2}}{q}\sin(\theta)\right)^2-p^2}}
    \end{equation*}
    \begin{equation*}
        y^2=\frac{(q^2-p^2)(1-q^2)\cos^2(\theta)}{\left(1+\frac{\sqrt{q^2-p^2}\sqrt{1-q^2}}{q}\sin(\theta)\right)^2-p^2}
    \end{equation*}
Note that 
\begin{equation*}
    \frac{\left(1-q^2+\frac{\sqrt{q^2-p^2}\sqrt{1-q^2}}{q}\sin(\theta)\right)^2}{\left(1+\frac{\sqrt{q^2-p^2}\sqrt{1-q^2}}{q}\sin(\theta)\right)^2-p^2}+\frac{(q^2-p^2)(1-q^2)\cos^2(\theta)}{\left(1+\frac{\sqrt{q^2-p^2}\sqrt{1-q^2}}{q}\sin(\theta)\right)^2-p^2}=1-q^2
\end{equation*}
Then 
\begin{equation*}
    x^2+y^2=\frac{1}{4}\left(1-\frac{\sqrt{q^2-p^2}\sqrt{1-q^2}}{q}\sin(\theta)\right)^2-\frac{1}{4}p^2
\end{equation*}
\end{proof}

\begin{proposition}
    The $\omega^p$-area enclosed by $\Gamma$ is $2\pi-2\pi q$ for $0<p^2<q^4$ .
\end{proposition}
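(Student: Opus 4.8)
The plan is to compute the enclosed area by Stokes' theorem, converting the planar integral over the region $D$ bounded by $\Gamma$ into a line integral along $\Gamma$. First I would record a primitive of $\omega^p$: writing $\omega^p=\frac{2r}{\sqrt{p^2+4r^2}}dr\wedge d\phi$ in polar coordinates $(r,\phi)$, one checks directly that $\omega^p=d\lambda$ with $\lambda=\frac12\sqrt{p^2+4r^2}\,d\phi$, a $1$-form smooth away from the origin. Next I would verify that $\Gamma$ does not enclose the origin: from the appendix $z=\frac{A+iB}{2\sqrt D}$ with $2\sqrt D>0$, so $\Gamma$ meets the real axis exactly where $B=2qc\cos\theta=0$, i.e. at $\theta=\tfrac\pi2,\tfrac{3\pi}2$, and there the sign of $x$ equals the sign of $A=(1-2q^2+p^2)-c^2\sin^2\theta=\frac{p^2-q^4}{q^2}<0$ (here $c=\frac{\sqrt{q^2-p^2}\sqrt{1-q^2}}{q}$ and I use $p^2<q^4$). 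Thus $\Gamma$ misses the positive real axis, $0$ lies in the unbounded component of the complement, $\lambda$ is smooth on $\overline D$, and the enclosed $\omega^p$-area is $\bigl|\oint_\Gamma\lambda\bigr|$.

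The second step is to make the line integral explicit. By Lemma \ref{norm}, $4\|z\|^2=(1-c\sin\theta)^2-p^2$, hence $p^2+4r^2=(1-c\sin\theta)^2$ and, since $0<c<1$, $\sqrt{p^2+4r^2}=1-c\sin\theta$. Because $\oint_\Gamma d\phi=0$, this gives $\oint_\Gamma\lambda=\frac12\oint_\Gamma(1-c\sin\theta)\,d\phi=-\frac c2\oint_\Gamma\sin\theta\,d\phi$. The key simplification is that the positive real factor $2\sqrt D$ does not affect the argument, so $\phi=\arg(A+iB)$ with $A=(1-2q^2+p^2)-c^2\sin^2\theta$ and $B=2qc\cos\theta$; therefore $\frac{d\phi}{d\theta}=\frac{AB'-BA'}{A^2+B^2}$, where a short computation gives $AB'-BA'=-2qc\sin\theta\,(E+c^2\sin^2\theta)$ with $E=1-2q^2+p^2-2c^2$, and $A^2+B^2=(1+c^2\sin^2\theta-p^2)^2-4c^2\sin^2\theta=\bigl(c^2\sin^2\theta-(1+|p|)^2\bigr)\bigl(c^2\sin^2\theta-(1-|p|)^2\bigr)$.

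Substituting $w=c^2\sin^2\theta$ turns $-\frac c2\oint_\Gamma\sin\theta\,d\phi$ into $\int_0^{2\pi}\frac{q\,w(w+E)}{(w-\alpha)(w-\beta)}\,d\theta$ with $\alpha=(1+|p|)^2$, $\beta=(1-|p|)^2$. Reducing the degree and doing partial fractions, everything collapses to the elementary integrals $\int_0^{2\pi}\frac{d\theta}{c^2\sin^2\theta-\alpha}$ and $\int_0^{2\pi}\frac{d\theta}{c^2\sin^2\theta-\beta}$, which I would evaluate with $\int_0^{2\pi}\frac{d\theta}{a-b\sin^2\theta}=\frac{2\pi}{\sqrt{a(a-b)}}$. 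The computation is made clean by the identity $1+p^2-c^2=\frac{p^2+q^4}{q^2}$, whence $(1+|p|)^2-c^2=\frac{(q^2+|p|)^2}{q^2}$ and $(1-|p|)^2-c^2=\frac{(q^2-|p|)^2}{q^2}$; the latter is positive precisely because $p^2<q^4$, which is exactly the hypothesis and is also what keeps $A^2+B^2>0$ (i.e. $\Gamma$ off the origin). Collecting the two contributions yields $\oint_\Gamma\lambda=-2\pi(1-q)$, so the enclosed $\omega^p$-area is $2\pi-2\pi q$, the sign merely recording that $\Gamma$ is traversed clockwise, consistent with the proof of Proposition \ref{use}.

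The main obstacle is the bookkeeping in this final definite integral: keeping the partial-fraction coefficients and the several square roots consistent, and correctly pinning down the orientation and overall sign. As a consistency check I would let $p\to0$, where $c\to\sqrt{1-q^2}$, $\omega^p\to dr\wedge d\phi$, and $r\to 1-q$, so the area tends to $2\pi(1-q)$, matching the claimed value and the limiting toric fiber $T\!\left(q-\tfrac12,q-\tfrac12\right)$.
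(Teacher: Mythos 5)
Your proposal is correct, and at its core it is the same computation as the paper's: apply Stokes' theorem to a primitive of $\omega^p$ of the form $\tfrac12\sqrt{p^2+4r^2}\,d\phi$ (up to a closed correction), use Lemma \ref{norm} to replace $\sqrt{p^2+4r^2}$ by $1-c\sin\theta$, differentiate $\phi$ along the explicit parametrization, and reduce to elementary trigonometric integrals. The genuine differences are in how the two subtleties are handled. First, the origin: the paper sidesteps the singularity of $\tfrac12\sqrt{p^2+4r^2}\,d\phi$ at $0$ by using the corrected primitive $\bigl(\tfrac{\sqrt{p^2+4r^2}}{2}-\tfrac{|p|}{2}\bigr)d\phi$, which extends smoothly over the origin (see the paper's closing Remark), and then verifies $\int_\Gamma d\phi=0$ by a separate integral computation; you instead prove directly that $\Gamma$ misses the positive real axis (the crossings occur at $\theta=\tfrac\pi2,\tfrac{3\pi}{2}$ where $x$ has the sign of $\tfrac{p^2-q^4}{q^2}<0$), so the origin lies in the unbounded component, the naive primitive is legitimate on $\overline D$, and $\oint_\Gamma d\phi=0$ comes for free as a winding number. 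This is arguably cleaner, and it has the merit of making the hypothesis $p^2<q^4$ enter visibly and geometrically (it is exactly what keeps $\Gamma$ away from, and unlinked from, the origin), whereas in the paper it enters only implicitly through signs in the closed-form integrals such as $\tfrac{2\pi q}{q^2-p}$. Second, the trigonometric reduction: the paper symmetrizes via $\theta\mapsto 2\pi-\theta$ and lands on integrals of $\tfrac{1}{1\pm p+c\sin\theta}$, while you substitute $w=c^2\sin^2\theta$ and use $\int_0^{2\pi}\tfrac{d\theta}{a-b\sin^2\theta}=\tfrac{2\pi}{\sqrt{a(a-b)}}$ together with the identities $(1\pm|p|)^2-c^2=\tfrac{(q^2\pm|p|)^2}{q^2}$; I have checked that your partial-fraction route does collapse to $\oint_\Gamma\lambda=2\pi q-2\pi$, matching the paper's $\tfrac12(4\pi q-4\pi)$. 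The $p\to 0$ consistency check is a nice addition not present in the paper.
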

\begin{proof}
    First we determine a $1$-form $\sigma$ such that $d\sigma=\omega^p$. Note that $\sigma$ can be arranged to have the form\\ $\left(\frac{\sqrt{p^2+4r^2}}{2}+C\right)d\phi$. On one hand we can compute 
    \begin{equation*}
    \int_{B^2(1)}\frac{2r}{\sqrt{p^2+4r^2}}dr\wedge d\phi=\pi\sqrt{p^2+4}-\pi|p|
    \end{equation*}
    On the other hand,
    \begin{equation*}
        \int_{B^2(1)}\frac{2r}{\sqrt{p^2+4r^2}}dr\wedge d\phi=\int_{S^1}\left(\frac{\sqrt{p^2+4\cdot 1^2}}{2}+C\right)d\phi=\pi\sqrt{p^2+4}+2\pi C
    \end{equation*}
    Thus $C=-\frac{|p|}{2}$ and $\sigma=\left(\frac{\sqrt{p^2+4r^2}}{2}-\frac{|p|}{2}\right)d\phi$.

    Now we compute $\int_\Gamma\sqrt{p^2+4r^2}d\phi=\int_\Gamma\sqrt{p^2+4r^2}\frac{d\phi}{d\theta}d\theta$. Note that $1-\frac{\sqrt{q^2-p^2}\sqrt{1-q^2}}{q}\sin(\theta)>0$. By Lemma \ref{norm}, we have 
    \begin{equation*}
        \sqrt{p^2-4r^2}=1-\frac{\sqrt{q^2-p^2}\sqrt{1-q^2}}{q}\sin(\theta)
    \end{equation*}
    By taking the derivative with respect to $\theta$ on both sides of 
    \begin{equation*}
        \tan(\phi)=\frac{y}{x}=\frac{2\sqrt{q^2-p^2}\sqrt{1-q^2}\cos(\theta)}{1-2q^2+p^2-\frac{(q^2-p^2)(1-q^2)}{q^2}\sin^2(\theta)}
    \end{equation*}
    we can get 
    \begin{align*}
        \frac{d\phi}{d\theta} &=\frac{\frac{-2\sqrt{q^2-p^2}\sqrt{1-q^2}\sin(\theta)}{q}}{\left[\left(1+\frac{\sqrt{q^2-p^2}\sqrt{1-q^2}}{q}\sin(\theta)\right)^2-p^2\right]\cdot\left[\left(1-\frac{\sqrt{q^2-p^2}\sqrt{1-q^2}}{q}\sin(\theta)\right)^2-p^2\right]}\\    &\cdot\left(\frac{p^2-q^2+p^2(1-q^2)}{q}+\frac{(q^2-p^2)(1-q^2)}{q}\sin^2(\theta)\right)\\
        &=-\frac{1}{2}\left(\frac{1}{\left(1-\frac{\sqrt{q^2-p^2}\sqrt{1-q^2}}{q}\sin(\theta)\right)^2-p^2}-\frac{1}{\left(1+\frac{\sqrt{q^2-p^2}\sqrt{1-q^2}}{q}\sin(\theta)\right)^2-p^2}\right)\\
        &\cdot\left(\frac{p^2-q^2+p^2(1-q^2)}{q}+\frac{(q^2-p^2)(1-q^2)}{q}\sin^2(\theta)\right)\\
    \end{align*}
    Now 
    \begin{align*}
        &\int_\Gamma\sqrt{p^2+4r^2}d\phi\\
        &=\int_0^{2\pi}-\frac{1}{2}\frac{1-\frac{\sqrt{q^2-p^2}\sqrt{1-q^2}}{q}\sin(\theta)}{\left(1-\frac{\sqrt{q^2-p^2}\sqrt{1-q^2}}{q}\sin(\theta)\right)^2-p^2}\cdot\left(\frac{p^2-q^2+p^2(1-q^2)}{q}+\frac{(q^2-p^2)(1-q^2)}{q}\sin^2(\theta)\right)d\theta\\
        &-\int_0^{2\pi}-\frac{1}{2}\frac{1-\frac{\sqrt{q^2-p^2}\sqrt{1-q^2}}{q}\sin(\theta)}{\left(1+\frac{\sqrt{q^2-p^2}\sqrt{1-q^2}}{q}\sin(\theta)\right)^2-p^2}\cdot\left(\frac{p^2-q^2+p^2(1-q^2)}{q}+\frac{(q^2-p^2)(1-q^2)}{q}\sin^2(\theta)\right)d\theta
    \end{align*}
    In the first integral we replace $\theta$ with $2\pi-\theta$. Then 
    \begin{align*}
        &\int_\Gamma\sqrt{p^2+4r^2}d\phi\\
        &=-\frac{1}{2}\int_0^{2\pi}\left(\frac{p^2-q^2+p^2(1-q^2)}{q}+\frac{(q^2-p^2)(1-q^2)}{q}\sin^2(\theta)\right)\frac{2\frac{\sqrt{q^2-p^2}\sqrt{1-q^2}}{q}\sin(\theta)}{\left(1+\frac{\sqrt{q^2-p^2}\sqrt{1-q^2}}{q}\sin(\theta)\right)^2-p^2}d\theta
    \end{align*}
   Note that 
   \begin{align*}
       &\frac{p^2-q^2+p^2(1-q^2)}{q}+\frac{(q^2-p^2)(1-q^2)}{q}\sin^2(\theta)\\
       &=q\left[\left(\frac{\sqrt{q^2-p^2}\sqrt{1-q^2}}{q}\sin(\theta)+1\right)^2-p^2-2\frac{q^2-p^2}{q^2}-2\frac{\sqrt{q^2-p^2}\sqrt{1-q^2}}{q}\sin(\theta)\right]
   \end{align*}
   Then 
   \begin{align*}
       &\int_\Gamma\sqrt{p^2+4r^2}d\phi\\
       &=-\frac{q}{2}\int_0^{2\pi}2\frac{\sqrt{q^2-p^2}\sqrt{1-q^2}}{q}\sin(\theta)d\theta\\
       &+q\int_0^{2\pi}\left(\frac{q^2-p^2}{q^2}+\frac{\sqrt{q^2-p^2}\sqrt{1-q^2}}{q}\sin(\theta)\right)\frac{2\frac{\sqrt{q^2-p^2}\sqrt{1-q^2}}{q}\sin(\theta)}{\left(1+\frac{\sqrt{q^2-p^2}\sqrt{1-q^2}}{q}\sin(\theta)\right)^2-p^2}d\theta\\
       &=q\int_0^{2\pi}\left(\frac{q^2-p^2}{q^2}+\frac{\sqrt{q^2-p^2}\sqrt{1-q^2}}{q}\sin(\theta)\right)\frac{2\frac{\sqrt{q^2-p^2}\sqrt{1-q^2}}{q}\sin(\theta)}{\left(1+\frac{\sqrt{q^2-p^2}\sqrt{1-q^2}}{q}\sin(\theta)\right)^2-p^2}d\theta
   \end{align*}
   since $\int_0^{2\pi}2\frac{\sqrt{q^2-p^2}\sqrt{1-q^2}}{q}\sin(\theta)d\theta=0$.
   Note that 
   \begin{align*}
       &\left(\frac{q^2-p^2}{q^2}+\frac{\sqrt{q^2-p^2}\sqrt{1-q^2}}{q}\sin(\theta)\right)\frac{\sqrt{q^2-p^2}\sqrt{1-q^2}}{q}\sin(\theta)\\
       &=\left(\frac{\sqrt{q^2-p^2}\sqrt{1-q^2}}{q}\sin(\theta)+1\right)^2-p^2-\frac{p^2+q^2}{q^2}\frac{\sqrt{q^2-p^2}\sqrt{1-q^2}}{q}\sin(\theta)-1+p^2
   \end{align*}
   We have 
   \begin{align*}
       \int_\Gamma\sqrt{p^2+4r^2}d\phi
       &=2q\int_0^{2\pi}1d\theta+2q\int_0^{2\pi}\frac{-\frac{p^2+q^2}{q^2}\frac{\sqrt{q^2-p^2}\sqrt{1-q^2}}{q}\sin(\theta)-1+p^2}{\left(1+\frac{\sqrt{q^2-p^2}\sqrt{1-q^2}}{q}\sin(\theta)\right)^2-p^2}d\theta\\
       &=4\pi q-2q\int_0^{2\pi}\frac{\frac{p^2+q^2}{q^2}\frac{\sqrt{q^2-p^2}\sqrt{1-q^2}}{q}\sin(\theta)+1-p^2}{\left(1+\frac{\sqrt{q^2-p^2}\sqrt{1-q^2}}{q}\sin(\theta)\right)^2-p^2}d\theta\\
   \end{align*}
   Now we focus on the integral
   \begin{align*}
       &\int_0^{2\pi}\frac{\frac{p^2+q^2}{q^2}\frac{\sqrt{q^2-p^2}\sqrt{1-q^2}}{q}\sin(\theta)+1-p^2}{\left(1+\frac{\sqrt{q^2-p^2}\sqrt{1-q^2}}{q}\sin(\theta)\right)^2-p^2}d\theta\\
       &=\frac{(1+p)(q^2+p)}{2q^2}\int_0^{2\pi}\frac{1}{1+p+\frac{\sqrt{q^2-p^2}\sqrt{1-q^2}}{q}\sin(\theta)}d\theta+\frac{(1-p)(q^2-p)}{2q^2}\int_0^{2\pi}\frac{1}{1-p+\frac{\sqrt{q^2-p^2}\sqrt{1-q^2}}{q}\sin(\theta)}d\theta
   \end{align*}
One can easily compute
\begin{equation*}
\int_0^{2\pi}\frac{1}{1+p+\frac{\sqrt{q^2-p^2}\sqrt{1-q^2}}{q}\sin(\theta)}d\theta= \frac{2\pi q}{q^2+p}
\end{equation*}
and 
\begin{equation*}
\int_0^{2\pi}\frac{1}{1-p+\frac{\sqrt{q^2-p^2}\sqrt{1-q^2}}{q}\sin(\theta)}d\theta= \frac{2\pi q}{q^2-p}
\end{equation*}
Thus 
\begin{equation*}
    \int_0^{2\pi}\frac{\frac{p^2+q^2}{q^2}\frac{\sqrt{q^2-p^2}\sqrt{1-q^2}}{q}\sin(\theta)+1-p^2}{\left(1+\frac{\sqrt{q^2-p^2}\sqrt{1-q^2}}{q}\sin(\theta)\right)^2-p^2}d\theta= \frac{2\pi}{q}
\end{equation*}
Then 
\begin{equation*}
    \int_\Gamma\sqrt{p^2+4r^2}d\phi=4\pi q-4\pi
\end{equation*}

Next one can follow the same process to show that 
\begin{equation*}
    \int_\Gamma\frac{|p|}{2}d\phi=0
\end{equation*}

Thus if the curve $\Gamma$ is reparametrized by replacing $\theta$ with $2\pi-\theta$, then the $\omega^p$-area enclosed by $\Gamma$ is $2\pi-2\pi q$.
\end{proof}

\begin{remark}
    If we write the $1$-form $\frac{\sqrt{p^2+4r^2}}{2}d\phi$ as $\frac{-y\sqrt{p^2+4x^2+4y^2}}{2(x^2+y^2)}dx+\frac{x\sqrt{p^2+4x^2+4y^2}}{2(x^2+y^2)}dy$, then it is easy to see that $\frac{\sqrt{p^2+4r^2}}{2}d\phi$ is not defined at $(0,0)$. On the other hand $\left(\frac{\sqrt{p^2+4r^2}}{2}-\frac{|p|}{2}\right)d\phi=\frac{-2y}{\sqrt{p^2+4x^2+4y^2}+|p|}dx+\frac{2x}{\sqrt{p^2+4x^2+4y^2}+|p|}dy$ is defined at $(0, 0)$.
\end{remark}

\bibliographystyle{amsalpha}
\bibliography{ref}

\end{document}